\newlength{\depthofsumsign}
\let\I\@undefined
\newcommand{\textcyr}[1]{
{\fontencoding{OT2}\fontfamily{wncyr}\selectfont
\hyphenchar\font=-1\relax#1}}
\DeclareMathOperator{\D}{d}
\DeclareMathOperator{\I}{Im}
\DeclareMathOperator{\R}{Re}
\def\XXint#1#2#3{{\setbox0=\hbox{$#1{#2#3}{\int}$}
     \vcenter{\hbox{$#2#3$}}\kern-.5\wd0}}
\def\eor{\hfill$ \square$}
\theoremstyle{plain}
\newtheorem{theorem}{Theorem}[section]
\newtheorem{proposition}[theorem]{Proposition}
\newtheorem{lemma}[theorem]{Lemma}
\newenvironment{remark}[1][Remark]{\begin{trivlist}
\item[\hskip \labelsep {\bfseries #1}]}{\end{trivlist}}
\theoremstyle{definition}
\numberwithin{equation}{section}
\begin{document}

\pagenumbering{roman}
\selectlanguage{english}
\title[Two Definite Integrals Involving Products of Four Legendre Functions]{Two Definite Integrals\\ Involving Products of Four Legendre Functions}
\author{Yajun Zhou
}
\address{
Program in Applied and Computational Mathematics, Princeton University, Princeton, NJ 08544, USA
}
\email{\vspace{2em}yajunz@math.princeton.edu}
\date{\today}

\maketitle

\begin{abstract}
    The definite integrals $ \int_{-1}^1x[P_\nu(x)]^4\D x$ and $ \int_{0}^1x[P_\nu(x)]^2\{[P_\nu(x)]^2-[P_\nu(-x)]^2\}\D x$ are evaluated in closed form, where $ P_\nu$ stands for the Legendre function of degree $ \nu\in\mathbb C$. Special cases of these integral formulae have appeared in arithmetic studies of automorphic Green's functions and Epstein zeta functions. \\\\\textit{Keywords}: Legendre functions, Bessel functions,  asymptotic expansions, automorphic Green's functions \\\\\textit{Subject Classification (AMS 2010)}: 33C05, 33C15\ (Primary), 11F03 (Secondary) \end{abstract}


\pagenumbering{arabic}
\section{Introduction}

In \cite[][\S6, Eq.~77]{Zhou2013Pnu} and \cite[][Remark 3.3.4.1, Eq.~3.3.38]{AGF_PartII}, we stated the following definite integral formulae \begin{align}
\int_{-1}^1
x[P_\nu(x)]^4\D x={}&\frac{2\sin^4(\nu\pi )[\psi^{(2)}(\nu+1)+\psi^{(2)}(-\nu)+28\zeta(3)]}{(2\nu+1)^2\pi^4}\label{eq:Int4Pnu}\intertext{and}\int_{0}^1x[P_\nu(x)]^2\{[P_\nu(x)]^2-[P_\nu(-x)]^2\}\D x={}&\frac{4\sin^2(\nu\pi)}{(2\nu+1)^{2}\pi^{2}}\left[ \frac{\psi^{(0)}(\nu+1)+\psi^{(0)}(-\nu)}{2}+ \gamma_0+2\log2\right]\label{eq:Int4Pnu_mir}
\end{align}without elaborating on their proof. Here, the Legendre function of the first kind is defined  by the Mehler--Dirichlet integral  \cite[][p.~22, Eq.~1.6.28]{ET1} \begin{align}
P_\nu(\cos\theta)=\frac{2}{\pi}\int_0^\theta \frac{\cos\frac{(2\nu+1)\beta}{2}}{\sqrt{\smash[b]{2(\cos\beta-\cos\theta)}}}\D\beta,\quad  \theta\in(0,\pi),\nu\in\mathbb C;
\end{align} the polygamma functions $\psi^{(m)}(z)=\D^{m+1}\log\Gamma(z)/\D z^{m+1},m\in\mathbb Z_{\geq0} $ are logarithmic derivatives of the  Euler gamma function; the Euler--Mascheroni constant $ \gamma_0:=\lim_{n\to\infty}\left(-\log n +\sum_{k=1}^n\frac1k\right)$ is given by  $ -\psi^{(0)}(1/2)-2\log 2$ and Ap\'ery's constant $ \zeta(3)= \sum_{n=1}^\infty n^{-3}$ is  $ -\psi^{(2)}(1/2)/14$. On the right-hand sides of \eqref{eq:Int4Pnu} and \eqref{eq:Int4Pnu_mir}, the expressions are well-defined for $ \nu\in\mathbb C\smallsetminus(\mathbb Z\cup\{-1/2\})$, and  extend to all $\nu\in\mathbb C$, by continuity.

Some special cases of \eqref{eq:Int4Pnu} and \eqref{eq:Int4Pnu_mir} have shown up in arithmetic studies of automorphic Green's functions (see \cite[][Eqs.~2.3.18 and 2.3.40]{AGF_PartI} as well as  \cite[][Remark 3.3.4.1, Eqs.~3.3.34--3.3.37]{AGF_PartII}) and Epstein zeta functions \cite[][Eqs.~4.18 and 4.28]{EZF}, namely,\begin{align}
\zeta(3)={}&-\frac{2\pi^{4}}{189}\int_{-1}^1 x[P_{-1/6}(x)]^4\D x=-\frac{\pi^{4}}{168}\int_{-1}^1 x[P_{-1/4}(x)]^4\D x=-\frac{\pi^{4}}{243}\int_{-1}^1 x[P_{-1/3}(x)]^4\D x,\\\zeta(5)={}&-\frac{\pi^4}{372}\int_{-1}^1
x[P_{-1/2}(x)]^4\D x,\label{eq:zeta5_int_repn}
\end{align}and\begin{align}
\frac{8\pi^2}{9}\int_0^1x[P_{-1/6}(x)]^2\{[P_{-1/6}(x)]^{2}-[P_{-1/6}(-x)]^{2}\}\D x={}&-3\log3,\label{eq:G3_Hecke1_spec}\\\frac{\pi^2}{8}\int_0^1x[P_{-1/4}(x)]^2\{[P_{-1/4}(x)]^{2}-[P_{-1/4}(-x)]^{2}\}\D x={}&-\log2,\label{eq:G3_Hecke2_spec}\\\frac{\pi^2}{27}\int_0^1x[P_{-1/3}(x)]^2\{[P_{-1/3}(x)]^{2}-[P_{-1/3}(-x)]^{2}\}\D x={}&2\log2-\frac{3}{2}\log3,\label{eq:G3_Hecke3_spec}\\\frac{\pi^2}{7}\int_0^1x[P_{-1/2}(x)]^2\{[P_{-1/2}(x)]^{2}-[P_{-1/2}(-x)]^{2}\}\D x={}&-\zeta(3).
\end{align}Here, $ \zeta(5)=\sum_{n=1}^\infty n^{-5}$.
These special cases have already been verified by independent methods in the aforementioned references.
For example, \eqref{eq:G3_Hecke1_spec}--\eqref{eq:G3_Hecke3_spec} evaluate, respectively, the following  weight-6 automorphic Green's functions \cite[cf.][Eqs. 2.3.18--2.3.20]{AGF_PartI}: \begin{align}
G_{3}^{\mathfrak H/\overline{\varGamma}_0(1)}\left( \frac{1+i\sqrt{3}}{2},i \right),\quad G_{3}^{\mathfrak H/\overline\varGamma_0(2)}\left( \frac{i-1}{2},\frac{i}{\sqrt{2}}\right)\quad \text{and}\quad G_{3}^{\mathfrak H/\overline\varGamma_0(3)}\left( \frac{3+i\sqrt{3}}{6},\frac{i}{\sqrt{3}}\right),
\end{align} where \begin{align}
G_{3}^{\mathfrak H/\overline\varGamma_0(N)}(z_{1},z_{2}):=-\sum_{\substack{a,b,c,d\in\mathbb Z\\ ad-Nbc=1}}Q_{2}
\left( 1+\frac{\left\vert z_{1} -\frac{a z_2+b}{Ncz_{2}+d}\right\vert ^{2}}{2\I z_1\I\frac{a z_2+b}{Ncz_{2}+d}} \right),
\end{align}  with $Q_2(t)=-\frac{3t}{2}+\frac{3t^2-1}{4}\log\frac{t+1}{t-1}$ for $t>1$.
One can compute these special values of automorphic Green's functions  by other means (see \cite[][Chap.~IV, Proposition 2]{GrossZagierII} and \cite[][Remark 3.3.4.1]{AGF_PartII}).
 In addition to descending from the symmetry  of Epstein zeta functions \cite[][Eq.~4.18]{EZF}, the evaluation in \eqref{eq:zeta5_int_repn} has also appeared in the studies of lattice sums by  Wan and Zucker \cite[][Eq.~42]{WanZucker2014}.

In this article, we establish \eqref{eq:Int4Pnu} and \eqref{eq:Int4Pnu_mir} in full generality, drawing on an analytic technique explored extensively in \cite{Zhou2013Pnu,Zhou2013Int3Pnu}: the Hansen--Heine scaling limits that relate Legendre functions $P_\nu$ of large degrees $ |\nu|\to\infty$ to Bessel functions (see \cite[][\S5.7 and \S5.71]{Watson1944Bessel} as well as \eqref{eq:Pnu_J0} and \eqref{eq:Qnu_Y0} in this article). The article is organized as follows. In \S\ref{sec:Int4Bessel}, we present a collection of integrals, old and new, over products of four Bessel functions. In \S\ref{sec:main_ids}, we  investigate the asymptotic behavior of the definite integrals in  \eqref{eq:Int4Pnu} and \eqref{eq:Int4Pnu_mir}, based on  the analysis in \S\ref{sec:Int4Bessel}, and verify the integral formulae for all $\nu\in\mathbb C$ by contour integrations.

 \section{Some definite integrals involving products of four Bessel functions\label{sec:Int4Bessel}}

 We  use $ P_\nu$ to define the  Legendre  functions of the second kind, as follows: \begin{align}\label{eq:def_Qnu}Q_\nu(x):=\frac{\pi}{2\sin(\nu\pi)}[\cos(\nu\pi)P_\nu(x)-P_{\nu}(-x)],\quad \nu\in\mathbb C\smallsetminus\mathbb Z;\quad Q_n(x):=\lim_{\nu\to n}Q_\nu(x),\quad n\in\mathbb Z_{\geq0}\end{align}for $ -1<x<1$. For $ \nu\in\mathbb C,-\pi<\arg z\leq\pi$,  the Bessel functions $ J_\nu$ and $Y_\nu$ are defined by\begin{align}
J_\nu(z):={}&\sum_{k=0}^\infty\frac{(-1)^k}{k!\Gamma(k+\nu+1)}\left( \frac{z}{2} \right)^{2k+\nu},&Y_\nu(z):={}&\lim_{\mu\to\nu}\frac{J_\mu(z)\cos(\mu\pi )-J_{-\mu}(z)}{\sin(\mu\pi)},
\intertext{in parallel to the modified Bessel functions
$I_\nu$ and $K_\nu$:}I_\nu(z):={}&\sum_{k=0}^\infty\frac{1}{k!\Gamma(k+\nu+1)}\left( \frac{z}{2} \right)^{2k+\nu},&K_\nu(z):={}&\frac{\pi}{2}\lim_{\mu\to\nu}\frac{I_{-\mu}(z)-I_\mu(z)}{\sin(\mu\pi)}.\end{align}

As in  \cite[][\S3]{Zhou2013Pnu}, our arguments in this paper draw heavily on the following asymptotic formulae that connect Legendre functions   $P_\nu$ and $Q_\nu$ to Bessel functions $ J_0$ and $Y_0$ (see  \cite[][Eqs.~43 and 46]{Jones2001} or \cite[][Chap.~12, Eqs.~12.18 and 12.25]{Olver1974}):\begin{align}P_{\nu}(\cos\theta)={}&\sqrt{\frac{\theta}{\sin\theta}}J_0\left( \frac{(2\nu+1)\theta}{2}\right)+O\left( \frac{1}{2\nu+1}  \right),\label{eq:Pnu_J0}\\ Q_\nu(\cos\theta)={}&-\frac{\pi}{2}\sqrt{\frac{\theta}{\sin\theta}}Y_0\left(\frac{(2\nu+1)\theta}{2}\right)+O\left( \frac{1}{2\nu+1}  \right),\label{eq:Qnu_Y0}\end{align}where the error bounds are uniform for  $ \theta\in(0,\pi/2]$, so long as $ |\nu|\to\infty,-\pi<\arg \nu<\pi$ \cite{Jones2001,Olver1974}.
\begin{lemma}We have the following evaluations:\begin{align}
\int_0^\infty xJ_0(x)[Y_0(x)]^3\D x =\int_0^\infty x[J_0(x)]^{3}Y_0(x)\D x=-\frac{1}{4\pi},\label{eq:JY3_J3Y}
\end{align}and a vanishing identity:\begin{align}
\int_0^\infty x[J_0(x)]^{2}\left\{[J_0(x)]^{2}-3[Y_0(x)]^2\right\}\D x=0,\label{eq:J4_3J2Y2}
\end{align}where it is understood that  $ \int_0^\infty f(x)\D x:=\lim_{M\to+\infty}\int_0^M f(x)\D x$.\end{lemma}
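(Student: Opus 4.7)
The plan is to express each integrand via the Hankel functions $H_0^{(1)}=J_0+iY_0$ and $H_0^{(2)}=J_0-iY_0$, exploiting the conjugation identity $H_0^{(2)}(x)=\overline{H_0^{(1)}(x)}$ on $\mathbb R_+$ together with the exponential decay $H_0^{(1)}(z)\sim\sqrt{2/(\pi z)}\,e^{iz}$ in the upper half-plane. Setting
\begin{equation*}
A:=\int_0^\infty x[H_0^{(1)}(x)]^4\D x,\qquad B:=\int_0^\infty x[H_0^{(1)}(x)]^3 H_0^{(2)}(x)\D x,
\end{equation*}
and substituting $J_0=(H_0^{(1)}+H_0^{(2)})/2$, $Y_0=(H_0^{(1)}-H_0^{(2)})/(2i)$, a direct algebraic manipulation yields
\begin{align*}
\int_0^\infty xJ_0(x)[Y_0(x)]^3\D x &= \tfrac{1}{8}(2\I B-\I A),\\
\int_0^\infty x[J_0(x)]^3Y_0(x)\D x &= \tfrac{1}{8}(2\I B+\I A),\\
\int_0^\infty x[J_0(x)]^2\{[J_0(x)]^2-3[Y_0(x)]^2\}\D x &= \tfrac{1}{2}(\R A+\R B).
\end{align*}
The three identities of the lemma thus reduce to the two claims $\I A=0$ and $A+B=-i/\pi$.

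\textbf{Contour rotation.} Both claims follow from rotating the integration contour onto the positive imaginary axis. For $A$, the integrand $z[H_0^{(1)}(z)]^4$ is analytic in the upper half-plane apart from a $z(\log z)^4$ behavior at the origin (integrable, with the small-arc indentation contributing nothing in the limit), and on large arcs decays like $z^{-1}e^{4iz}$. Setting $z=iy$ and using $H_0^{(1)}(iy)=(2/i\pi)K_0(y)$ gives
\begin{equation*}
A=-\frac{16}{\pi^4}\int_0^\infty y[K_0(y)]^4\D y\in\mathbb R.
\end{equation*}
For $A+B$, observe that $A+B=2\int_0^\infty x[H_0^{(1)}(x)]^3J_0(x)\D x$, whose integrand is also analytic in the upper half-plane (since $J_0$ is entire) and decays like $z^{-2}e^{2iz}$ on large arcs. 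The same rotation, now with $J_0(iy)=I_0(y)$, produces
\begin{equation*}
A+B=-\frac{16i}{\pi^3}\int_0^\infty y[K_0(y)]^3 I_0(y)\D y.
\end{equation*}

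\textbf{Evaluation of the Bessel moment.} It remains to show $\int_0^\infty y[K_0(y)]^3 I_0(y)\D y=\pi^2/16$. The plan is to unfold $K_0^3I_0$ via Macdonald's identity $[K_0(y)]^2=2\int_0^\infty K_0(2y\cosh s)\D s$ and the Nicholson-type identity $I_0(y)K_0(y)=\int_0^\infty J_0(2y\sinh t)\D t$ (the latter descending from the tabulated $\int_0^\infty J_0(x)(x^2+4y^2)^{-1/2}\D x=I_0(y)K_0(y)$ via the change of variable $x=2y\sinh t$). After swapping orders of integration, the $y$-integral collapses to the classical Hankel moment $\int_0^\infty yK_0(ay)J_0(by)\D y=(a^2+b^2)^{-1}$, leaving
\begin{equation*}
\int_0^\infty y[K_0(y)]^3 I_0(y)\D y=\frac{1}{2}\iint_{[0,\infty)^2}\frac{\D s\,\D t}{\cosh^2 s+\sinh^2 t}.
\end{equation*}
The factoring $\cosh^2 s+\sinh^2 t=\cosh(s+t)\cosh(s-t)$ invites the orthogonal substitution $(u,v)=(s+t,s-t)$, which separates the integral; the inner $v$-integration is $\int_{-u}^{u}(\cosh v)^{-1}\D v=2\arctan(\sinh u)$, and the final substitution $\alpha=\arctan(\sinh u)$ (whence $\D\alpha=(\cosh u)^{-1}\D u$) collapses the whole expression to $\frac{1}{2}\int_0^{\pi/2}\alpha\D\alpha=\pi^2/16$.

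\textbf{Main obstacle.} The chief technical difficulty is that the iterated integrals arising in the Macdonald--Nicholson unfolding are only conditionally convergent, so Fubini's theorem does not apply in raw form. I plan to insert an exponential regulator $e^{-\epsilon y}$ into the $y$-integration throughout, perform all interchanges in the regularized problem where absolute convergence holds, and then recover the stated identity via continuity as $\epsilon\to0^+$. A parallel but minor technicality is the small-quarter-circle indentation around $z=0$ in the contour-rotation step, readily controlled by the leading asymptotic $H_0^{(1)}(z)=1+(2i/\pi)[\log(z/2)+\gamma]+O(z^2\log z)$ near the origin.
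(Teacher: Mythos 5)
Your proposal is correct, and while it opens the same way as the paper (Hankel decomposition plus contour arguments), it reaches the value $-\tfrac{1}{4\pi}$ by a genuinely different route. The linear algebra reducing the three integrals to $\I A=0$ and $A+B=-i/\pi$ checks out, and your proof of $\I A=0$ (rotation of $z[H_0^{(1)}(z)]^4$ onto the imaginary axis, landing on the manifestly real $-\tfrac{16}{\pi^4}\int_0^\infty y[K_0(y)]^4\D y$) is essentially the paper's argument for the first equality combined with the rotation it performs in the next lemma. The divergence is in the second step: the paper evaluates $\int_0^\infty x[J_0(x)]^3Y_0(x)\D x$ from the regularized principal-value integral $\mathscr P\int_{-\infty+i0^+}^{+\infty}\{z[J_0(z)]^2[H_0^{(1)}(z)]^2-\tfrac{1-e^{4iz}}{\pi^2 z}\}\D z=0$ together with the Dirichlet integral $\int_{-\infty}^{\infty}\tfrac{\sin x}{x}\D x=\pi$, and then needs a separate contour integral $\mathscr P\int zJ_0(z)[H_0^{(1)}(z)]^3\D z=0$ for the vanishing identity; you instead rotate $A+B=2\int_0^\infty x[H_0^{(1)}(x)]^3J_0(x)\D x$ onto the imaginary axis and evaluate the Bessel moment $\int_0^\infty yI_0(y)[K_0(y)]^3\D y=\pi^2/16$ from scratch via the Macdonald and Nicholson-type product representations. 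Your single identity $A+B=-i/\pi$ thus delivers both the common value (imaginary part) and the vanishing identity (real part) in one stroke and avoids the counterterm trick, at the price of a nontrivial moment evaluation --- one that is in fact tabulated in the Bailey--Borwein--Broadhurst--Glasser reference the paper already cites for $\int_0^\infty t[K_0(t)]^4\D t=\tfrac{7\zeta(3)}{8}$, so you could equally well quote it. Two small remarks: after including the factor $z$, the integrand $z[H_0^{(1)}(z)]^3J_0(z)$ decays like $z^{-1}e^{2iz}$ on large arcs rather than $z^{-2}e^{2iz}$ (Jordan's lemma applies either way); and the Fubini interchange in your unfolding is actually absolutely convergent, since $\int_0^\infty yK_0(ay)|J_0(by)|\D y=O\bigl(\min(a^{-2},a^{-3/2}b^{-1/2})\bigr)$ is integrable over the $(s,t)$ quadrant, so the $e^{-\epsilon y}$ regulator, while harmless, is not strictly needed.
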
\begin{proof}We first point out that the Hankel function  $ H_0^{(1)}(z)=J_0(z)+i Y_0(z)$ satisfies \begin{align}
\mathscr P\int_{-\infty+i0^+}^{+\infty}z[H_0^{(1)}(z)]^4\D z=0,\label{eq:HankelH1_4th_power_int}
\end{align}where the Cauchy principal value is taken: $ \mathscr P\int^{+\infty}_{-\infty+i0^+}f(z)\D z:=\lim_{M\to+\infty}\left(\int^{i0^+}_{-M+i0^+}+\int_{i0^+}^M\right)f(z)\D z$. The formula above  follows directly from the asymptotic behavior \cite[][\S7.2, Eq.~1]{Watson1944Bessel}\begin{align}
H_0^{(1)}(z)=\sqrt{\frac{2}{\pi z}}e^{i\left(z-\frac{\pi}{4}\right)}\left[ 1+O\left(\frac{1}{|z|}\right) \right],\quad |z|\to+\infty,-\pi<\arg z<2\pi,\label{eq:HankelH1_0_asympt}
\end{align}and an application of Jordan's lemma to a semi-circle
in the
upper half-plane. Noting that $ H_0^{(1)}(-x+i0^+)=-J_{0}(x)+i Y_0(x)$ for $ x>0$, we can convert \eqref{eq:HankelH1_4th_power_int} into \begin{align}
8i\left\{\int_0^\infty x[J_0(x)]^{3}Y_0(x)\D x-\int_0^\infty xJ_0(x)[Y_0(x)]^3\D x\right\} =0,
\end{align}which implies the first equality in \eqref{eq:JY3_J3Y}.

We  now use asymptotic analysis and residue calculus to establish a formula\begin{align}
\mathscr P\int_{-\infty+i0^+}^{+\infty}\left\{z[J_0(z)]^{2}[H_0^{(1)}(z)]^2-\frac{1-e^{4iz}}{\pi^{2}z}\right\}\D z=0,\label{eq:J2H12}
\end{align}and read off its imaginary part as\begin{align}
4\int_0^\infty x[J_0(x)]^{3}Y_0(x)\D x+\frac{1}{\pi^{2}}\int_{-\infty}^\infty\frac{\sin(4x)\D x}{x}=0.
\end{align}  Thus we arrive at the evaluation\begin{align}
\int_0^\infty x[J_0(x)]^{3}Y_0(x)\D x=-\frac{1}{4\pi},\label{eq:J3Y}
\end{align}upon invoking the familiar Dirichlet integral $\int_{-\infty}^\infty\frac{\sin x}{x}\D x=\pi$.

To prove \eqref{eq:J4_3J2Y2}, simply rewrite \begin{align}
\mathscr P\int_{-\infty+i0^+}^{+\infty}z J_{0}(z)[H_0^{(1)}(z)]^3\D z=0
\end{align} with the knowledge that $ [H_0^{(1)}(x)]^3-[H_0^{(1)}(-x+i0^+)]^3=[J_0(x)+i Y_0(x)]^{3}-[-J_{0}(x)+i Y_0(x)]^{3}=2[J_0(x)]^3-6J_{0}(x)[Y_0(x)]^2$.\end{proof}\begin{remark}We note that the following integrals for $ \R\nu>-1/2$\begin{subequations}\begin{align}\int_0^\infty x[J_\nu(bx)]^{2}J_\nu(cx)Y_\nu(cx)\D x={}&0,& 0<{}&b<c\intertext{and}\int_0^\infty x[J_\nu(bx)]^{2}J_\nu(cx)Y_\nu(cx)\D x={}&-\frac{1}{2\pi bc},& 0<{}&c<b\end{align} \end{subequations} have been tabulated in \cite[][item 2.13.24.2]{PBMVol2}. Formally, our \eqref{eq:J3Y}  is an average of the two integral formulae displayed above. \eor\end{remark}
\begin{lemma}We have the following evaluation:\begin{align}
\int_0^\infty x\left\{[J_0(x)]^{4}-6[J_0(x)Y_0(x)]^2+[Y_0(x)]^{4}\right\}\D x=-\frac{14\zeta(3)}{\pi^{4}} .\label{eq:J4_6J2Y2_Y4}
\end{align}\end{lemma}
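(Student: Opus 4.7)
Since $H_0^{(1)}=J_0+iY_0$ gives $\R\{[H_0^{(1)}(x)]^4\}=J_0(x)^4-6J_0(x)^2Y_0(x)^2+Y_0(x)^4$ on the positive real axis, the stated integral is just $\R\int_0^\infty x[H_0^{(1)}(x)]^4\D x$. Applying the $\mathscr P$-contour trick from Lemma~\ref{eq:JY3_J3Y} to $z[H_0^{(1)}(z)]^4$ alone would only extract the imaginary part and merely reproduce the (useless) identity $\int_0^\infty xJ_0(x)Y_0(x)\{[J_0(x)]^{2}-[Y_0(x)]^{2}\}\D x=0$. To access the real part, I would instead rotate the integration path into the positive imaginary axis, where $H_0^{(1)}$ becomes tied to the real-valued modified Bessel function through $H_0^{(1)}(iy)=-\tfrac{2i}{\pi}K_0(y)$.

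Concretely, I would apply Cauchy's theorem to $z[H_0^{(1)}(z)]^4$ on the first-quadrant sector $\{\epsilon<|z|<M,\ 0<\arg z<\pi/2\}$, on which the integrand is holomorphic. The asymptotic $[H_0^{(1)}(z)]^4\sim -\tfrac{4e^{4iz}}{\pi^2z^2}$ delivers uniform decay $e^{-4M\sin\theta}$ on the large arc $z=Me^{i\theta}$, so Jordan's lemma kills that contribution; on the small arc, $H_0^{(1)}(z)\sim\tfrac{2i}{\pi}\log z$ forces an $O(\epsilon^2|\log\epsilon|^4)$ bound. Sending $\epsilon\to0^+$ and $M\to+\infty$ yields
\[
\int_0^\infty x[H_0^{(1)}(x)]^4\D x=-\frac{16}{\pi^4}\int_0^\infty y[K_0(y)]^4\D y,
\]
and taking real parts reduces the stated identity to the Bessel-moment evaluation $\int_0^\infty y[K_0(y)]^4\D y=\tfrac{7\zeta(3)}{8}$.

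To establish this Bessel moment by Mellin--Parseval, I would start from the standard formula $\int_0^\infty y^{s-1}[K_0(y)]^2\D y=\tfrac{\sqrt\pi\,\Gamma(s/2)^3}{4\,\Gamma((s+1)/2)}$ (readily derived by inserting the Nicholson-type identity $[K_0(y)]^2=2\int_0^\infty K_0(2y\cosh t)\D t$ and a beta-integral evaluation). The Mellin convolution at $s=2$, simplified through two applications of Euler's reflection formula, produces
\[
\int_0^\infty y[K_0(y)]^4\D y=-\frac{\pi^2}{16i}\int_{c-i\infty}^{c+i\infty}\frac{\cos(\pi w/2)\D w}{(w-1)\sin^3(\pi w/2)},\qquad 0<c<2.
\]
Closing the Bromwich line to the right captures triple poles at $w=2m$ ($m\ge1$); the parity property $\cos(\pi u/2)/\sin^3(\pi u/2)=u^{-3}\cdot(\text{even series})$ with leading behavior $\frac{8}{\pi^3 u^3}$ (and vanishing $u^{-1}$ correction), combined with the geometric series for $1/(w-1)$, shows that the residue at $w=2m$ equals $-1/(2m-1)^3$. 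Euler's identity $\sum_{m\ge1}(2m-1)^{-3}=\tfrac78\zeta(3)$ then completes the evaluation.

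The main technical hurdle will be the triple-pole residue extraction---pushing the Laurent expansion of $\cos(\pi w/2)/\sin^3(\pi w/2)$ to order $u$ at each $w=2m$ and verifying by parity that the $u^{-2}$ and $u^{-1}$ intrinsic correction terms vanish so that only the geometric expansion of $1/(w-1)$ feeds the residue. The auxiliary estimates (Jordan's lemma on the large quarter-arc, Stirling-type decay of the Mellin integrand on vertical strips so that the closure is legitimate) are routine.
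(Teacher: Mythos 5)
Your proposal is correct, and its first half coincides exactly with the paper's own proof: both recast the stated integral as $\R\int_0^\infty z[H_0^{(1)}(z)]^4\,\D z$, rotate the contour onto the positive imaginary axis (Jordan's lemma on the large quarter-arc thanks to the $e^{4iz}/z$ asymptotics, a logarithmic estimate near the origin), and arrive at $-\frac{16}{\pi^4}\int_0^\infty y[K_0(y)]^4\,\D y$; your side remark that the principal-value trick of the preceding lemma would only recover the imaginary-part identity $\int_0^\infty xJ_0Y_0(J_0^2-Y_0^2)\,\D x=0$ is also accurate. Where you genuinely depart from the paper is in the treatment of the Bessel moment $\int_0^\infty y[K_0(y)]^4\,\D y=\frac{7}{8}\zeta(3)$: the paper simply cites this value from the Bessel-moment literature (Ouvry; Groote--K\"orner--Privovarov; Bailey--Borwein--Broadhurst--Glasser), whereas you derive it via the Mellin transform $\int_0^\infty y^{s-1}[K_0(y)]^2\,\D y=\sqrt{\pi}\,\Gamma(s/2)^3/\bigl(4\Gamma((s+1)/2)\bigr)$, Parseval's formula at $s=2$, and residues at the triple poles $w=2m$. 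I have checked the key steps of that derivation: the reflection-formula reduction to $-\frac{\pi^2}{16i}\int_{c-i\infty}^{c+i\infty}\frac{\cos(\pi w/2)}{(w-1)\sin^3(\pi w/2)}\,\D w$ is right; the local expansion $\cos(\pi u/2)/\sin^3(\pi u/2)=\frac{8}{\pi^3u^3}\bigl(1+O(u^4)\bigr)$ indeed has no $u^{-1}$ correction, so only the geometric expansion of $1/(w-1)$ feeds the residue $-1/(2m-1)^3$; and the clockwise closure yields $\sum_{m\ge1}(2m-1)^{-3}=\frac{7}{8}\zeta(3)$ with the correct sign. So your route costs an extra stretch of classical Mellin--Barnes work but makes the lemma self-contained, while the paper's route is shorter at the price of resting on an externally tabulated evaluation.
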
\begin{proof}We recast the  stated integral  into   \begin{align}
\R\int_0^\infty z[H_0^{(1)}(z)]^4\D z=-\frac{14\zeta(3)}{\pi^{4}}.
\end{align}To show the identity above, deform the contour to the positive $\I z$-axis, use the relation $ H_0^{(1)}(iy)=2iK_{0}(y)/\pi,\forall y>0$, along with the following formula:\begin{align}
\int_0^\infty t[K_0(t)]^4\D t=\frac{7\zeta(3)}{8},
\end{align} which is found in \cite[][p.~6]{Ouvry2005}, \cite[][Eq.~202]{Groote2007} and \cite[][p.~19, Table 1]{Bailey2008}. (Thanks to  the extensive and systematic research on  ``Bessel moments'' in the references just mentioned, it is now possible to evaluate $ \int_0^\infty t^{\ell+mn}[K_m(t)]^n\D t$ for $ \ell,m\in\mathbb Z_{\geq0},n\in\{1,2,3,4\}$ in closed form, as implemented in symbolic computation software like \textit{Mathematica}. In contrast, only sporadic results are known when five or more Bessel factors are involved \cite[][\S5 and \S6]{Bailey2008}.)\end{proof}

\begin{lemma}We have the following integral identity:\begin{align}\int_0^\infty \left(x[J_0(x)]^{2}\{[Y_0(x)]^2-[J_0(x)]^{2}\}+\frac{1-\cos(4x)}{\pi^{2}x}\right)\D x
=0.\label{eq:J2Y2J2_cos}
\end{align}\end{lemma}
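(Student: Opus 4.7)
The plan is to imitate the contour-integration technique underlying \eqref{eq:J2H12}, which gave \eqref{eq:J3Y} from the imaginary part, but this time to extract information from the real part by deforming the contour from the positive real axis to the positive imaginary axis. For real $x>0$, the decomposition $[H_0^{(1)}(x)]^2 = \{[J_0(x)]^2 - [Y_0(x)]^2\} + 2iJ_0(x)Y_0(x)$ together with $\R[(1-e^{4ix})/x] = (1-\cos 4x)/x$ gives
\[
\int_0^\infty\left\{x[J_0(x)]^2\{[Y_0(x)]^2-[J_0(x)]^2\}+\frac{1-\cos 4x}{\pi^2 x}\right\}\D x = -\R A,
\]
where $A := \int_0^\infty\bigl\{x[J_0(x)]^2[H_0^{(1)}(x)]^2 - (1-e^{4ix})/(\pi^2 x)\bigr\}\D x$. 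Thus \eqref{eq:J2Y2J2_cos} is equivalent to $\R A = 0$. The integral $A$ converges: its integrand is bounded near $x=0$ (with $x[J_0(x)]^2[H_0^{(1)}(x)]^2 = O(x(\log x)^2)$), and the asymptotic expansion that underlies \eqref{eq:J2H12} shows that after subtracting the counter-term the integrand decays like $-2ie^{2ix}/(\pi^2 x)$ at infinity.

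Next I would apply Cauchy's theorem to the quarter-contour in the first quadrant (positive real segment, large quarter-arc, and descending imaginary segment). The integrand of $A$ is analytic in the closed first quadrant because the branch cut of $H_0^{(1)}$ lies on $(-\infty,0]$ and the counter-term $(1-e^{4iz})/(\pi^2 z)$ has only a removable singularity at $z=0$; Jordan's lemma dispatches the quarter-arc at infinity because the remainder $-2ie^{2iz}/(\pi^2 z)$ decays uniformly in the upper half-plane. Using $J_0(iy)=I_0(y)$ and $H_0^{(1)}(iy)=-2iK_0(y)/\pi$, the deformation onto the positive imaginary axis gives
\[
A = \frac{1}{\pi^2}\int_0^\infty\left\{4y[I_0(y)K_0(y)]^2 - \frac{1-e^{-4y}}{y}\right\}\D y,
\]
which is manifestly a real number. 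Hence $\R A = A$, and as a consistency check the automatic vanishing of $\I A$ reproduces \eqref{eq:J3Y}.

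It therefore suffices to prove the Bessel moment identity $\int_0^\infty\bigl\{4y[I_0(y)K_0(y)]^2 - (1-e^{-4y})/y\bigr\}\D y = 0$. Introducing the parameter $a>0$ and setting $F(a) := \int_0^\infty\bigl\{4y[I_0(y)K_0(y)]^2 - (1-e^{-ay})/y\bigr\}\D y$, differentiation under the integral sign yields $F'(a) = -1/a$, so $F(a) = C - \log a$ for some constant $C$; the target $F(4)=0$ is equivalent to $C = \log 4$, or equivalently to the regularized asymptotic $\int_0^M 4y[I_0(y)K_0(y)]^2\D y = \log M + \gamma_0 + \log 4 + o(1)$ as $M\to\infty$. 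The main obstacle is this last identification of the regularized Bessel moment, which I would attempt via the Nicholson-type representation $2I_0(y)K_0(y) = \int_0^\infty J_0(2y\sinh t)\D t$ together with a careful regularization of the ensuing divergent double integral, or by invoking tabulated Bessel moments of the Bailey--Borwein--Broadhurst--Glasser type.
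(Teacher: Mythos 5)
Your reduction coincides with the paper's through its first two steps: you rewrite \eqref{eq:J2Y2J2_cos} as the vanishing of the real part of $\int_0^\infty\{z[J_0(z)]^2[H_0^{(1)}(z)]^2-(1-e^{4iz})/(\pi^2z)\}\D z$ and rotate the contour onto the positive imaginary axis, arriving at the equivalent identity $\int_0^\infty\{4y[I_0(y)K_0(y)]^2-(1-e^{-4y})/y\}\D y=0$. Your justification of the rotation (analyticity in the closed first quadrant, $O(z(\log z)^2)$ behaviour at the origin, decay of the subtracted integrand like $e^{2iz}/z$ on the quarter-arc) is sound, and your observation that $\I A=0$ reproduces \eqref{eq:J3Y} is a correct consistency check. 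The problem is the final step. Introducing $F(a)=\int_0^\infty\{4y[I_0K_0]^2-(1-e^{-ay})/y\}\D y$ and computing $F'(a)=-1/a$ determines $F$ only up to an additive constant, and that constant is the entire content of the lemma: the statement $F(4)=0$ is \emph{equivalent} to the regularized asymptotic $\int_0^M4y[I_0(y)K_0(y)]^2\D y=\log M+\gamma_0+2\log2+o(1)$ that you would still need to prove. You explicitly defer this to ``a careful regularization'' of a divergent double integral or to unspecified tabulated moments, neither of which is carried out. The differentiation-in-$a$ device therefore restates the problem in an equivalent form rather than solving it; as written, the proof has a genuine gap precisely where the nontrivial input is required.

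For contrast, the paper closes this gap without ever computing a regularized constant. It embeds the target into the one-parameter family \eqref{eq:IIKK-IIKK}; the member $\nu=0$ is exactly the identity you need, because $I_{\pm1/2}$ and $K_{\pm1/2}$ are elementary and $4y\,I_{-1/2}(y)I_{1/2}(y)K_{-1/2}(y)K_{1/2}(y)=(1-e^{-4y})/y$. The functional equation $f(\nu)=f(\nu-1/2)$ is then proved by combining the representation $I_\nu K_\nu=\int_0^{\pi/2}J_{2\nu}(2y\tan\phi)\sec\phi\,\D\phi$ with the Weber--Schafheitlin discontinuous integral, and $f(\nu)=\lim_{n\to\infty}f(\nu+n/2)=0$ follows by dominated convergence. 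If you wish to complete your own route instead, you must actually establish the asymptotic constant $\gamma_0+2\log2$, which amounts to a computation of comparable difficulty to the paper's argument; until that is done, the proof is incomplete.
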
\begin{proof}The claimed formula is equivalent to the statement that \begin{align}
\R\int_0^\infty \left\{\frac{1-e^{4iz}}{\pi^{2}z}-z[J_0(z)]^{2}[H_0^{(1)}(z)]^2\right\}\D z=0.
\end{align}    Deforming the contour of integration to the positive $\I z$-axis, we see that it suffices to verify another vanishing identity: \begin{align}
\int_0^\infty\left\{ \frac{1-e^{-4y}}{y}-4y[I_0(y)]^2[K_0(y)]^2 \right\}\D y=0.
\end{align}To put the formula above in broader context, we will demonstrate that \begin{align}\int_0^\infty y\left\{ I_{\nu-1/2}(y)I_{\nu+1/2}(y)K_{\nu-1/2}(y)K_{\nu+1/2}(y)-[I_\nu(y)]^2[K_\nu(y)]^2 \right\}\D y=0\label{eq:IIKK-IIKK}\end{align}holds as long as $\R\nu>-1/2$. We start from the following identity \cite[cf.][\S13.6, Eq.~3]{Watson1944Bessel}:\begin{align}
I_\nu(y)K_\nu(y)={}&\int_0^{\pi/2}\frac{J_{2\nu}(2y\tan\phi)\D\phi}{\cos\phi},& \R\nu>{}&-\frac{1}{2},
\intertext{and transform it with the aid of a standard recurrence formula  $ 2\partial J_\nu(z)/\partial z=J_{\nu-1}(z)- J_{\nu+1}(z)$ for Bessel functions \cite[][\S3.2, Eq.~2]{Watson1944Bessel}. In particular, we have}
I_{\nu-1}(y)K_{\nu-1}(y)-I_\nu(y)K_\nu(y)={}&\int_0^{\pi/2}\frac{J_{2\nu-1}(2y\tan\theta)\sin\theta\D\theta}{y},& \R\nu>{}&0,\\I_{\nu-1/2}(y)K_{\nu-1/2}(y)-I_{\nu+1/2}(y)K_{\nu+1/2}(y)={}&\int_0^{\pi/2}\frac{J_{2\nu}(2y\tan\theta)\sin\theta\D\theta}{y},& \R\nu>{}&-\frac{1}{2},
\end{align} after integration by parts. If we now denote the left-hand side of \eqref{eq:IIKK-IIKK} by  $f(\nu)$, then \begin{align}
f(\nu)-f(\nu-1/2)={}&\int_0^\infty\D y\iint_{0<\theta<\pi/2,0<\phi<\pi/2}\D\theta\D\phi\,\frac{\sin\theta}{\cos\phi}\times\notag\\&\times [J_{2\nu-1}(2y\tan\theta)J_{2\nu}(2y\tan\phi)-J_{2\nu}(2y\tan\theta)J_{2\nu-1}(2y\tan\phi)],
\end{align}where the integration over $y$ has a closed form \cite[][\S13.42, Eq.~8]{Watson1944Bessel}:\begin{align}
\int_0^\infty J_\mu(at)J_{\mu-1}(bt)\D   t=\begin{cases}b^{\mu-1}/a^\mu,\ & a>b>0, \\
0, & 0<a<b, \\
\end{cases}
\end{align} and one may indeed interchange  the order of integrations, upon introducing a convergent factor $ e^{-\varepsilon y},\varepsilon\to0^+$. This brings us  \begin{align}
f(\nu)-f(\nu-1/2)=\iint_{0<\theta<\phi<\pi/2}\left( \frac{\tan\theta}{\tan\phi} \right)^{2\nu}\frac{\cos\theta}{2\cos\phi}\D\theta\D\phi-\iint_{0<\phi<\theta<\pi/2}\left( \frac{\tan\phi}{\tan\theta} \right)^{2\nu}\frac{\sin\theta}{2\sin\phi}\D\theta\D\phi.
\end{align}Here, both double integrals are convergent when $\R \nu>0$, and they exactly cancel each other (as is evident from variable substitutions $ \theta\mapsto\frac\pi2-\theta$ and $\phi\mapsto\frac\pi2-\phi$). Thus, we have $ f(\nu)=f(\nu-1/2)$ as long as $\R\nu>0$. Therefore, the relation $ f(\nu)=\lim_{n\to\infty}f(\nu+n/2)$ holds for $\R\nu>-1/2$. When $ \nu>-1/2$, we can compute the limit  $ \lim_{n\to\infty}f(\nu+n/2)=0 $ through the dominated convergence theorem. Finally, we  claim that $ f(\nu)=0,\R\nu>-1/2$ follows from analytic continuation.   \end{proof} \begin{remark}It is worth noting that the ``Bessel moments'' in the form of $ \int_0^\infty t^k[I_0(t)]^2[K_0(t)]^{n-2}\D t$ (where $n\geq5,k\geq0$) appear in the investigations  of two-dimensional quantum field theory \cite[][Eq.~13]{Bailey2008}. More generally, in $D$-dimensional space, the free propagator of a massive particle is expressible in terms of $ K_{(D-2)/2}$  \cite[][Eq.~6]{Groote2007}, and the product of Bessel functions $ I_{(D-2)/2}K_{(D-2)/2}$ arises from angular average of the aforementioned propagator \cite[][Eq.~227]{Groote2007}. Thus, it might be appropriate to ask for some physical interpretations or implications of the cancellation formula in \eqref{eq:IIKK-IIKK}.\eor\end{remark}

\section{Proof of the main identities\label{sec:main_ids}}
In this section, we shall refer to  the left- and right-hand sides of \eqref{eq:Int4Pnu} (resp.~\eqref{eq:Int4Pnu_mir}) as $ A_L(\nu)$ and $A_R(\nu)$ (resp.~$ B_L(\nu)$ and $B_R(\nu)$).
It is clear that all these four functions are invariant under the variable substitution $ \nu\mapsto-\nu-1$. Furthermore, one can show that the Taylor expansions\begin{align}
(2\nu +1)^{2}A_R(\nu)={}&4(\nu-n)-\frac{8\pi^{3}}{3}(\nu-n)^3+O((\nu-n)^4),\label{eq:AR_Taylor}\\(2\nu +1)^{2}B_R(\nu)={}&2(\nu-n)+O((\nu-n)^2)\label{eq:BR_Taylor}
\end{align}hold around every non-negative integer $ n$. Before achieving our final goal of proving $A_L(\nu)=A_R(\nu)$ and $ B_L(\nu)=B_R(\nu)$, we need to check  that the Taylor expansions of $ (2\nu +1)^2A_L(\nu)$ and $ (2\nu+1)^2 B_L(\nu)$ agree with their respective counterparts in \eqref{eq:AR_Taylor} and \eqref{eq:BR_Taylor}.

\begin{lemma}\label{lm:AL_BL_Taylor}\begin{enumerate}[leftmargin=*,  label=\emph{(\alph*)},ref=(\alph*),
widest=a, align=left]\item We have an integral formula for $\nu\in\mathbb C$:\begin{align}
(2\nu+1)^{2}\int_{-1}^1x[P_\nu(x)]^3P_\nu(-x)\D x={}&\frac{\sin(2\nu\pi)\cos(\nu\pi)}{\pi}\label{eq:P3P_eval}\end{align}and a vanishing identity for $n\in\mathbb Z_{\geq0}$:\begin{align}
\int_{-1}^1xP_n(x)Q_{n}(x)\left\{ \frac{4}{\pi^{2}} [Q_{n}(x)]^{2}-[P_n(x)]^2\right\}\D x=0.\label{eq:PQQQ0}
\end{align}\item We have the following  Taylor expansions\begin{align}
(2\nu +1)^{2}A_L(\nu)={}&4(\nu-n)-\frac{8\pi^{2}}{3}(\nu-n)^3+O((\nu-n)^4),\label{eq:AL_Taylor}\\(2\nu +1)^{2}B_L(\nu)={}&2(\nu-n)+O((\nu-n)^2)\label{eq:BL_Taylor}
\end{align} as $\nu$ approaches $ n\in\mathbb Z_{\geq0}$. \end{enumerate}\end{lemma}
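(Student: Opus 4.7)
The plan is to handle part (a) first, then bootstrap part (b) from it; the integral identity \eqref{eq:P3P_eval} supplies the principal input, and the vanishing identity \eqref{eq:PQQQ0} plays the auxiliary role needed to reach the cubic correction in \eqref{eq:AL_Taylor}.

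For \eqref{eq:P3P_eval}, I would first note that both sides are entire in $\nu$, invariant under $\nu\mapsto-\nu-1$, and that both vanish at every integer (on the left, the integrand at $\nu=n$ reduces to the odd function $(-1)^n x[P_n(x)]^4$). My first route would be analytic: substitute $x=\cos\theta$ and use the Mehler--Dirichlet representation to write each $P_\nu(\pm\cos\theta)$ as a $\cos((2\nu+1)\beta/2)$-weighted integral, then extract the $(2\nu+1)^{-2}$ prefactor via two rounds of integration by parts in the $\beta$-variables, after which trigonometric product-to-sum identities collapse the resulting quadruple integral. Failing that, I would proceed in the spirit of \S\ref{sec:Int4Bessel}: compute the asymptotic behavior of the left-hand side as $|\nu|\to\infty$ in sectors avoiding the real axis via the Hansen--Heine scalings \eqref{eq:Pnu_J0}--\eqref{eq:Qnu_Y0} together with the connection formula $P_\nu(-\cos\theta)=\cos(\nu\pi)P_\nu(\cos\theta)-(2/\pi)\sin(\nu\pi)Q_\nu(\cos\theta)$, and then invoke Phragm\'en--Lindel\"of on the (entire) difference between the two sides. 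For \eqref{eq:PQQQ0}, I would use the explicit representation $Q_n(x)=\frac12 P_n(x)\log\frac{1+x}{1-x}-W_{n-1}(x)$ with $W_{n-1}$ a polynomial of degree $n-1$; the substitution $x=\tanh t$ then converts the integrand into a weighted sum of moments of the form $t^{j}\tanh(t)\sech^{2}(t)R_j(\tanh t)$ for $j\in\{0,1,2,3\}$ and $R_j$ polynomials. Each is elementary, and the precise coefficient $4/\pi^2$ is what makes the $t^2$-moments (which invoke $\int_0^\infty t^2\sech^2(t)\D t = \pi^2/12$) cancel against the polynomial contributions.

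For part (b), solve the connection formula to write $P_\nu(x) = \sec(\nu\pi)P_\nu(-x) + (2\tan(\nu\pi)/\pi)Q_\nu(x)$, and apply it to one of the four factors in $[P_\nu(x)]^4$. Invoking \eqref{eq:P3P_eval} yields
\begin{align*}
(2\nu+1)^2 A_L(\nu) = \frac{\sin(2\nu\pi)}{\pi} + \frac{2\tan(\nu\pi)}{\pi}\bigl[(2\nu+1)^2 G(\nu)\bigr],\qquad G(\nu):=\int_{-1}^1 x[P_\nu(x)]^3 Q_\nu(x)\D x.
\end{align*}
With the elementary expansions $\sin(2\nu\pi)/\pi = 2(\nu-n) - \tfrac{4\pi^2}{3}(\nu-n)^3 + O((\nu-n)^5)$ and $2\tan(\nu\pi)/\pi = 2(\nu-n) + \tfrac{2\pi^2}{3}(\nu-n)^3 + O((\nu-n)^5)$, matching \eqref{eq:AL_Taylor} with \eqref{eq:AR_Taylor} is equivalent to
\begin{align*}
(2n+1)^2 G(n) = 1,\qquad \frac{\D}{\D\nu}\bigl[(2\nu+1)^2 G(\nu)\bigr]\bigg|_{\nu=n} = 0,\qquad \frac{\D^2}{\D\nu^2}\bigl[(2\nu+1)^2 G(\nu)\bigr]\bigg|_{\nu=n} = -2\pi^2.
\end{align*}
The value $(2n+1)^2 G(n) = 1$ can be established by induction on $n$ using the explicit form of $Q_n$ together with the three-term recurrence for $P_n$, or by combining the functional equation $G(\nu) + G(-\nu-1) = \pi\cot(\nu\pi)A_L(\nu)$ (derived from the connection formula) with the Hansen--Heine asymptotic $(2\nu+1)^2 G(\nu) \to 1$. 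The two derivative conditions are extracted by differentiating \eqref{eq:P3P_eval} once and twice in $\nu$ and then invoking \eqref{eq:PQQQ0}, which supplies the additional linear relation needed to close the system at second order. Finally, \eqref{eq:BL_Taylor} follows immediately from the algebraic identity
\begin{align*}
B_L(\nu) = \tfrac12\bigl[A_L(\nu) + {\textstyle\int_0^1} x\{[P_\nu(x)]^2 - [P_\nu(-x)]^2\}^2\D x\bigr],
\end{align*}
since the connection formula renders the second integrand $O((\nu-n)^2)$, so the linear coefficient of $(2\nu+1)^2 B_L(\nu)$ is exactly half that of $(2\nu+1)^2 A_L(\nu)$.

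The hard part will be the second derivative of $(2\nu+1)^2 G(\nu)$ at $\nu=n$, which dictates the cubic term $-\tfrac{8\pi^2}{3}(\nu-n)^3$ in \eqref{eq:AL_Taylor} and is what forces us to use \eqref{eq:PQQQ0} at full strength; the analogous analysis for $B_L$ needs only the value $(2n+1)^2 G(n) = 1$ and is correspondingly easier.
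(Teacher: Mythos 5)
Your overall architecture is sound and, at bottom, equivalent to the paper's: part (a) supplies the input, and part (b) is a Taylor-matching exercise at the integers in which \eqref{eq:PQQQ0} is needed only for the cubic coefficient of $(2\nu+1)^{2}A_L(\nu)$. Your decomposition $(2\nu+1)^{2}A_L(\nu)=\frac{\sin(2\nu\pi)}{\pi}+\frac{2\tan(\nu\pi)}{\pi}g(\nu)$ with $g(\nu)=(2\nu+1)^{2}\int_{-1}^{1}x[P_\nu(x)]^{3}Q_\nu(x)\D x$ is correct, the three matching conditions $g(n)=1$, $g'(n)=0$, $g''(n)=-2\pi^{2}$ are correctly derived, and your identity $B_L(\nu)=\frac12\bigl[A_L(\nu)+\int_0^1x\{[P_\nu(x)]^{2}-[P_\nu(-x)]^{2}\}^{2}\D x\bigr]$ yields \eqref{eq:BL_Taylor} more cleanly than the paper's direct differentiation of $B_L$. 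Nevertheless there are two genuine gaps. First, neither identity in part (a) is actually proved. For \eqref{eq:P3P_eval} the paper cites its earlier derivation via the Tricomi pairing of two finite Hilbert-transform formulas; your Mehler--Dirichlet route is not carried out, and it is far from clear that two integrations by parts plus product-to-sum identities collapse a fourfold integral with square-root singularities. More seriously, for \eqref{eq:PQQQ0} the $x=\tanh t$ reduction only shows that each individual moment is elementary; it offers no mechanism forcing the sum to vanish for \emph{every} $n$, since the polynomial part $W_{n-1}$ of $Q_n$ depends on $n$ in a complicated way. (The case $n=0$ checks out, but the general case needs structural input of the kind the paper obtains from the Hardy--Poincar\'e--Bertrand formula applied to Neumann's integral $Q_n(x)=\mathscr P\int_{-1}^{1}P_n(\xi)\D\xi/(2(x-\xi))$, which produces \eqref{eq:PQ_T}--\eqref{eq:xPQ_T} uniformly in $n$.)

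Second, in part (b) your derivative count is off by one, and this is precisely where the difficulty of \eqref{eq:AL_Taylor} lives. Since $Q_n(x)=\frac12[P_n^{\{1\}}(x)-(-1)^nP_n^{\{1\}}(-x)]$ and $\partial_\nu Q_\nu|_{\nu=n}$ involves $P_n^{\{2\}}$, the condition $g'(n)=0$ already requires the \emph{second} $\nu$-derivative of \eqref{eq:P3P_eval}, and $g''(n)=-2\pi^{2}$ requires the \emph{third}: the terms $\int_{-1}^{1}x[P_n(x)]^{3}P_n^{\{3\}}(\pm x)\D x$ cannot be reached otherwise, because \eqref{eq:PQQQ0} supplies only the one missing relation among the terms cubic in $P_n^{\{1\}}$. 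Differentiating \eqref{eq:P3P_eval} ``once and twice,'' as you propose, leaves the system underdetermined at the order of the cubic coefficient. With the third derivative added, your scheme closes and reproduces the paper's computation; without it, it does not.
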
\begin{proof}\begin{enumerate}[leftmargin=*,  label=(\alph*),ref=(\alph*),
widest=a, align=left]\item The proof of \eqref{eq:P3P_eval} is found in \cite[][Corollary 5.3]{Zhou2013Pnu}. It was based on the Tricomi pairing (see \cite[][\S4.3, Eq.~2]{TricomiInt} or \cite[][Eq.~11.237]{KingVol1})\begin{align}
\int_{-1}^1f(x)\left[ \mathscr P \int_{-1}^1\frac{g(\xi)\D\xi}{\pi (x-\xi)} \right]\D x+\int_{-1}^1g(x)\left[ \mathscr P\int_{-1}^1\frac{f(\xi)\D\xi}{\pi (x-\xi)} \right]\D x=0\label{eq:Tricomi_pairing}
\end{align} of the following formulae \cite[][Proposition 5.2 and Corollary 5.3]{Zhou2013Pnu}\begin{align}
\frac{2\sin(\nu\pi)}{\pi}\mathscr P\int_{-1}^1\frac{P_\nu(\xi)P_\nu(-\xi)\D \xi}{x-\xi}={}&[P_{\nu}(x)]^{2}-[P_\nu(-x)]^2,\label{eq:Tricomi_PP}\\\frac{2\sin(\nu\pi)}{\pi}\mathscr P\int_{-1}^1\frac{\xi P_\nu(\xi)P_\nu(-\xi)\D \xi}{x-\xi}={}&x\{[P_{\nu}(x)]^{2}-[P_\nu(-x)]^2\}-\frac{2\sin(2\nu\pi)}{(2\nu+1)\pi},\label{eq:Tricomi_xPP}
\end{align} which are valid for $  x\in(-1,1),\nu\in\mathbb C$, along with an integral evaluation \cite[][Eq.~19$_{(0,\nu)}$]{Zhou2013Pnu}\begin{align}
\int_{-1}^1P_\nu(\xi)P_\nu(-\xi)\D \xi=\frac{2\cos(\nu\pi)}{2\nu+1},\quad \nu\in\mathbb C\smallsetminus\{-1/2\}.\label{eq:int_PP}
\end{align}

To justify \eqref{eq:PQQQ0}, we need the Tricomi pairing of the following formulae  for $n\in\mathbb Z_{\geq0}$:\begin{align}\frac4{\pi^2}\mathscr
P\int_{-1}^1\frac{P_n(\xi)Q_n(\xi)\D \xi}{x-\xi}={}&\frac{4}{\pi^{2}} [Q_{n}(x)]^{2}-[P_n(x)]^2,\label{eq:PQ_T}\\\frac4{\pi^2}\mathscr
P\int_{-1}^1\frac{\xi P_n(\xi)Q_n(\xi)\D \xi}{x-\xi}={}&x\left\{\frac{4}{\pi^{2}} [Q_{n}(x)]^{2}-[P_n(x)]^2\right\}.\label{eq:xPQ_T}
\end{align}
To prove  \eqref{eq:PQ_T}, apply the Hardy--Poincar\'e--Bertrand formula (see \cite[][\S4.3, Eq.~4]{TricomiInt} or \cite[][Eq.~11.52]{KingVol1}) to the Neumann integral representation of  $ Q_n$ \cite[][Eq.~11.269]{KingVol1}:\begin{align}
Q_{n}(x)=\mathscr P\int_{-1}^1\frac{P_n(\xi)\D \xi}{2(x-\xi)},\quad \forall x\in(-1,1),\forall n\in\mathbb Z_{\geq0.}
\end{align}To deduce \eqref{eq:xPQ_T} from \eqref{eq:PQ_T},  use a vanishing identity $ \int_{-1}^1P_n(x)Q_n(x)\D x=0,\forall n\in\mathbb Z_{\geq0}$ \cite[][item 2.18.13.3]{PBMVol3}.

              \item The symmetry of Legendre polynomials $ P_n(-x)=(-1)^nP_n(x),n\in\mathbb Z_{\geq0}$ gives $(2n +1)^{2}A_L(n)=(2n +1)^{2}B_L(n)=0 $.
So it remains to check the derivatives of $ A_L(\nu)$ and $B_L(\nu)$ at $\nu=n\in\mathbb Z_{\geq0}$.
 In what follows, we define  \begin{align}
P_n^{\{m\}}(x):=\left.\frac{\partial^m}{\partial\nu^m}\right|_{\nu=n}P_\nu(x),\quad m\in\mathbb Z_{>0},
\end{align}to simplify notations.

 For the linear order Taylor expansions of $ (2\nu +1)^{2}A_L(\nu)$ and $ (2\nu+1)^2B_L(\nu)$, we compute\begin{align}
(2n +1)^{2}A'_L(n)={}&4(2n +1)^{2}\int_{-1}^1x[P_n^{\vphantom{\{0\}}}(x)]^3P_n^{\{1\}}(x)\D x\intertext{and}(2n +1)^{2}B'_L(n)={}&2(2n +1)^{2}\int_{0}^1x[P_n^{\vphantom{\{0\}}}(x)]^2\{P_n^{\vphantom{\{0\}}}(x)P_n^{\{1\}}(x)-P_n^{\vphantom{\{0\}}}(-x)P_n^{\{1\}}(-x)\}\D x\notag\\={}&2(2n +1)^{2}\int_{-1}^1x[P_n^{\vphantom{\{0\}}}(x)]^3P_n^{\{1\}}(x)\D  x.
\end{align}Differentiating \eqref{eq:P3P_eval} at $ \nu=n\in\mathbb Z_{\geq0}$, we obtain\begin{align}&3(2n +1)^{2}\int_{-1}^1x[P_n^{\vphantom{\{0\}}}(x)]^2P_{n}^{\vphantom{\{0\}}}(-x)P_n^{\{1\}}(x)\D x+(2n +1)^{2}\int_{-1}^1x[P_n^{\vphantom{\{0\}}}(x)]^3P_n^{\{1\}}(-x)\D x\\={}&
2(2n +1)^{2}(-1)^{n}\int_{-1}^1x[P_n^{\vphantom{\{0\}}}(x)]^3P_n^{\{1\}}(x)\D x=\left.\frac{\partial}{\partial\nu}\right|_{\nu=n}\frac{\sin(2\nu\pi)\cos(\nu\pi)}{\pi}=2(-1)^{n},
\end{align}which yields $ (2n +1)^{2}A'_L(n)=2(2n +1)^{2}B'_L(n)=4$ for all $n\in\mathbb Z_{\geq0} $.

 For the quadratic order expansion of $ (2\nu +1)^{2}A_L(\nu)$,  we compute   \begin{align}&
(2n +1)^{2}A''_L(n)+8(2n+1)A'_{L}(n)\notag\\={}&4(2n +1)^{2}\int_{-1}^1x[P_n^{\vphantom{\{0\}}}(x)]^3P_n^{\{2\}}(x)\D x+12(2n +1)^{2}\int_{-1}^1x[P_n^{\vphantom{\{0\}}}(x)]^2[P_n^{\{1\}}(x)]^{2}\D x+\frac{32}{2n +1},
\end{align}and compare it to the second-order derivative of \eqref{eq:P3P_eval}:\begin{align}&
6(2n +1)^{2}\left\{\int_{-1}^1xP_n^{\vphantom{\{0\}}}(x)P_{n}^{\vphantom{\{0\}}}(-x)[P_n^{\{1\}}(x)]^{2}\D x+\int_{-1}^1x[P_n^{\vphantom{\{0\}}}(x)]^{2}P_n^{\{1\}}(x)P_n^{\{1\}}(-x)\D x\right\}\notag\\{}&+(2n +1)^{2}\left\{3\int_{-1}^1x[P_n^{\vphantom{\{0\}}}(x)]^2P_{n}^{\vphantom{\{0\}}}(-x)P_n^{\{2\}}(x)\D x+\int_{-1}^1x[P_n^{\vphantom{\{0\}}}(x)]^3P_n^{\{2\}}(-x)\D x\right\}+\frac{16(-1)^{n}}{2n+1}\notag\\={}&(-1)^{n}\left\{2(2n +1)^{2}\int_{-1}^1x[P_n^{\vphantom{\{0\}}}(x)]^3P_n^{\{2\}}(x)\D x+6(2n +1)^{2}\int_{-1}^1x[P_n^{\vphantom{\{0\}}}(x)]^2[P_n^{\{1\}}(x)]^{2}\D x+\frac{16}{2n+1}\right\}\notag\\={}&\left.\frac{\partial^{2}}{\partial\nu^{2}}\right|_{\nu=n}\frac{\sin(2\nu\pi)\cos(\nu\pi)}{\pi}=0.
\end{align}Here, the integral \begin{align}
\int_{-1}^1x[P_n^{\vphantom{\{0\}}}(x)]^{2}P_n^{\{1\}}(x)P_n^{\{1\}}(-x)\D  x
\end{align}vanishes because the integrand is an odd function. This shows that  $ (2\nu +1)^{2}A_L(\nu)=4(\nu-n)+O((\nu-n)^3)$.

To prepare for the computation of\begin{align}&
A'''_L(n)\notag\\={}&24\int_{-1}^1xP_n^{\vphantom{\{0\}}}(x)[P_n^{\{1\}}(x)]^{3}\D x+36\int_{-1}^1x[P_n^{\vphantom{\{0\}}}(x)]^{2}P_n^{\{1\}}(x)P_n^{\{2\}}(x)\D x+4\int_{-1}^1x[P_n^{\vphantom{\{0\}}}(x)]^{3}P_n^{\{3\}}(x)\D x
\end{align} occurring in the cubic order expansion, we  rewrite \eqref{eq:PQQQ0} with $ 2Q_n^{\vphantom{\{0\}}}(x)=P_n^{\{1\}}(x)-(-1)^{n}P_n^{\{1\}}(-x)$ (a consequence of \eqref{eq:def_Qnu}), which leads us to \begin{align}
\int_{-1}^1xP_n(x)[P_n^{\{1\}}(x)-(-1)^{n}P_n^{\{1\}}(-x)]^{3}\D x={}&\pi^2\int_{-1}^1x[P_n(x)]^{3}[P_n^{\{1\}}(x)-(-1)^{n}P_n^{\{1\}}(-x)]\D x\notag\\={}&2\pi^2\int_{-1}^1x[P_n(x)]^{3}P_n^{\{1\}}(x)\D x=\frac{2\pi^2}{(2n+1)^2}.
\end{align}It is then clear that {\allowdisplaybreaks\begin{align}
&\left.\frac{\partial^3}{\partial\nu^3}\right|_{\nu=n}\left\{\int_{-1}^1x[P_\nu(x)]^3P_\nu(-x)\D x-\int_{-1}^1x[P_\nu(-x)]^3P_\nu(x)\D x\right\}+\frac{12(-1)^{n}\pi^2}{(2n+1)^2}\notag\\={}&12(-1)^{n}\int_{-1}^1xP_n^{\vphantom{\{0\}}}(x)\{[P_n^{\{1\}}(x)]^{3}-(-1)^{n}[P_n^{\{1\}}(-x)]^{3}\}\D x\notag\\{}&+18(-1)^{n}\int_{-1}^1x[P_n^{\vphantom{\{0\}}}(x)]^{2}[P_n^{\{1\}}(x)P_n^{\{2\}}(x)-P_n^{\{1\}}(-x)P_n^{\{2\}}(-x)]\D x\notag\\{}&-2(-1)^{n}\int_{-1}^1x[P_n^{\vphantom{\{0\}}}(x)]^{3}[P_n^{\{3\}}(x)-(-1)^{n}P_n^{\{3\}}(-x)]\D x=2(-1)^{n}A'''_{L} (n).
\end{align}}This
gives the formula\begin{align}
A'''_{L}(n)=\frac{288}{(2n+1)^4}-\frac{16\pi^2}{(2n+1)^2}.
\end{align} The Taylor expansion in \eqref{eq:AL_Taylor} can now be verified. \qedhere\end{enumerate}\end{proof}
\begin{proposition}\label{prop:psi2_id}The  integral identity in \eqref{eq:Int4Pnu} holds for all $\nu\in\mathbb C$, where the evaluations of the right-hand side at $ \nu\in\{-1/2\}\cup\mathbb Z$ are interpreted as limits of a continuous function in $\nu$.\end{proposition}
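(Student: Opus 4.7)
The plan is to show that the entire function $G(\nu) := [A_L(\nu) - A_R(\nu)]/\sin^{4}(\pi\nu)$ vanishes identically on $\mathbb C$.  First I would verify that $A_R$ itself is entire: at each integer $\nu = n$ the quartic zero of $\sin^{4}(\pi\nu)$ absorbs the cubic pole of $\psi^{(2)}(-\nu)$ or $\psi^{(2)}(\nu+1)$, while at $\nu = -1/2$ the identity $\psi^{(2)}(1/2) = -14\zeta(3)$ yields $\Psi(-1/2) = 0$ for $\Psi(\nu) := \psi^{(2)}(\nu+1) + \psi^{(2)}(-\nu) + 28\zeta(3)$; invariance of $\Psi$ under $\nu \mapsto -\nu-1$ (which fixes $-1/2$) upgrades this to a zero of order two, cancelling the double pole of $(2\nu+1)^{-2}$.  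Lemma~\ref{lm:AL_BL_Taylor}(b) combined with \eqref{eq:AR_Taylor} then establishes that $(2\nu+1)^{2}[A_L - A_R]$ vanishes to order at least four at every non-negative integer (the linear, quadratic, and cubic coefficients all coincide), and the same holds at each negative integer by the $\nu \mapsto -\nu-1$ symmetry of both sides, so by the removable-singularity theorem $G$ is entire.

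Next I would derive the leading asymptotic of $(2\nu+1)^{2} A_L(\nu)$ as $|\nu| \to \infty$ with $-\pi < \arg\nu < \pi$.  Writing $A_L(\nu) = \int_{0}^{\pi/2}\cos\theta\sin\theta\{[P_{\nu}(\cos\theta)]^{4} - [P_{\nu}(-\cos\theta)]^{4}\}\D\theta$, invoking the reflection $P_{\nu}(-x) = \cos(\pi\nu)P_{\nu}(x) - \frac{2\sin(\pi\nu)}{\pi}Q_{\nu}(x)$, applying Hansen--Heine \eqref{eq:Pnu_J0}--\eqref{eq:Qnu_Y0}, and rescaling $t = (2\nu+1)\theta/2$, the leading integrand becomes $[J_{0}(t)]^{4} - [cJ_{0}(t) + sY_{0}(t)]^{4}$ with $c := \cos(\pi\nu)$, $s := \sin(\pi\nu)$.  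The algebraic identity
\begin{equation*}
[J_{0}]^{4} - [cJ_{0} + sY_{0}]^{4} = 2c^{2}s^{2}\bigl([J_{0}]^{4} - 3[J_{0}Y_{0}]^{2}\bigr) + s^{4}\bigl([J_{0}]^{4} - [Y_{0}]^{4}\bigr) - 4cs\bigl(c^{2}[J_{0}]^{3}Y_{0} + s^{2}J_{0}[Y_{0}]^{3}\bigr)
\end{equation*}
then separates the expansion into manifestly convergent pieces: \eqref{eq:J4_3J2Y2} annihilates the first bracket, \eqref{eq:JY3_J3Y} reduces each of the last two integrals to the common value $-1/(4\pi)$, and combining \eqref{eq:J4_6J2Y2_Y4} with \eqref{eq:J4_3J2Y2} delivers $\int_{0}^{\infty} t\bigl([J_{0}(t)]^{4} - [Y_{0}(t)]^{4}\bigr)\D t = 14\zeta(3)/\pi^{4}$, so that
\begin{equation*}
(2\nu+1)^{2} A_L(\nu) = \frac{56\zeta(3)\sin^{4}(\pi\nu)}{\pi^{4}} + \frac{2\sin(2\pi\nu)}{\pi} + o\bigl(|\sin(\pi\nu)|^{4}\bigr).
\end{equation*}
Since $\psi^{(2)}(z) = O(|z|^{-2})$ as $|z| \to \infty$ forces $(2\nu+1)^{2} A_R(\nu) = \frac{56\zeta(3)\sin^{4}(\pi\nu)}{\pi^{4}} + o(|\sin(\pi\nu)|^{4})$, we obtain $(2\nu+1)^{2} G(\nu) = \frac{2\sin(2\pi\nu)}{\pi\sin^{4}(\pi\nu)} + o(1) \to 0$ as $|\I\nu| \to \infty$ (because $|\sin(2\pi\nu)|/|\sin(\pi\nu)|^{4} = 2|\cos(\pi\nu)|/|\sin(\pi\nu)|^{3}$ decays exponentially in $|\I\nu|$).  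A direct estimate via the Mehler--Dirichlet representation shows $G$ is bounded on every horizontal strip; combined with the above, $G$ is entire, bounded on $\mathbb C$, and vanishing at imaginary infinity, so Liouville's theorem forces $G \equiv 0$, proving \eqref{eq:Int4Pnu}.

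The principal obstacle is making the asymptotic of the second paragraph rigorous.  The Hansen--Heine remainder $O(1/|2\nu+1|)$ in \eqref{eq:Pnu_J0}--\eqref{eq:Qnu_Y0}, once raised to the fourth power and integrated against $\cos\theta\sin\theta\,\D\theta$, must be shown to contribute $o(|\sin(\pi\nu)|^{4}/|\nu|^{2})$ uniformly in $\R\nu$, which requires careful handling of the $\theta$-regions where $|J_{0}((2\nu+1)\theta/2)|$ is largest.  Moreover, the individual Bessel moments $\int_{0}^{\infty}t[J_{0}(t)]^{4}\D t$, $\int_{0}^{\infty}t[Y_{0}(t)]^{4}\D t$, and $\int_{0}^{\infty}t[J_{0}(t)Y_{0}(t)]^{2}\D t$ all diverge logarithmically, so cancellations must proceed strictly through the convergent combinations prepared in \S\ref{sec:Int4Bessel}; the vanishing identity \eqref{eq:J4_3J2Y2} is indispensable in collapsing the coefficient of $c^{2}s^{2}$, which would otherwise be logarithmically divergent.
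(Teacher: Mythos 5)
Your setup --- the Hansen--Heine limits, the algebraic decomposition of $[J_0]^4-[cJ_0+sY_0]^4$ into the convergent combinations prepared in \S\ref{sec:Int4Bessel}, and the use of Lemma~\ref{lm:AL_BL_Taylor} together with \eqref{eq:AR_Taylor} to remove the singularities at the integers --- reproduces the paper's asymptotic computation of $(2\nu+1)^2A_L(\nu)$ correctly. The concluding step, however, is where the argument breaks. The paper does not invoke Liouville's theorem: it applies Cauchy's integral formula to $\mathscr A(\nu)=(2\nu+1)^{2}[A_L(\nu)-A_R(\nu)]/\sin^4(\nu\pi)$ over expanding squares $C_N$ whose vertical sides pass through $\R\nu=\frac{1\pm4N}{4}$, precisely so that $1/|\sin(\nu\pi)|$ and $|\cot(\nu\pi)|$ stay bounded on the contour uniformly in $N$; only analyticity \emph{inside} $C_N$ (from the Taylor expansions) and the decay $O((2\nu+1)^{-1/2})$ \emph{on} $C_N$ are needed. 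Your Liouville argument instead requires $G=[A_L-A_R]/\sin^4(\pi\nu)$ to be bounded on all of $\mathbb C$, hence in particular near the integers $n\to\pm\infty$ on the real axis. Lemma~\ref{lm:AL_BL_Taylor} gives the fourth-order vanishing of $A_L-A_R$ at each \emph{fixed} integer but provides no control of the fourth-order remainder uniformly in $n$, and the Hansen--Heine error estimates degenerate there because they carry powers of $1/\sin(\nu\pi)$. ``A direct estimate via the Mehler--Dirichlet representation'' bounds $A_L$ and $A_R$ separately, not their quotient by $\sin^4(\pi\nu)$. This is a genuine gap, and it is exactly the difficulty the paper's choice of contour is designed to sidestep.

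There is also a local error in your treatment of $A_R$: the claim $(2\nu+1)^2A_R(\nu)=\frac{56\zeta(3)\sin^4(\pi\nu)}{\pi^4}+o(|\sin(\pi\nu)|^4)$ ``because $\psi^{(2)}(z)=O(|z|^{-2})$'' fails for $\R\nu\to+\infty$ in a strip, since the argument $-\nu$ then approaches the poles of $\psi^{(2)}$ at the non-positive integers. The reflection formula gives $\psi^{(2)}(-\nu)=\psi^{(2)}(\nu+1)+2\pi^{3}\cos(\pi\nu)/\sin^{3}(\pi\nu)$, so $(2\nu+1)^2A_R(\nu)$ contains exactly the same term $\frac{2\sin(2\pi\nu)}{\pi}$ as your expansion of $(2\nu+1)^2A_L(\nu)$, and the two \emph{cancel} in the difference (this is the cancellation the paper records when it matches its $A_L$ asymptotics against $A_R(\nu)/\sin^4(\nu\pi)$). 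Your version happens to survive as $|\I\nu|\to\infty$ because $\sin(2\pi\nu)=o(|\sin(\pi\nu)|^{4})$ there, but on horizontal strips the uncancelled term $\frac{2\sin(2\pi\nu)}{\pi\sin^{4}(\pi\nu)}$ in your formula for $(2\nu+1)^{2}G(\nu)$ would be unbounded, contradicting the very boundedness you need for Liouville. Repairing both points --- using the reflection formula for $A_R$, and replacing Liouville by Cauchy's formula on contours that avoid the integers (or else supplying uniform-in-$n$ remainder bounds) --- brings the proof back to the paper's route.
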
\begin{proof} Similar to \cite[][Proposition 3.1]{Zhou2013Pnu}, we need to bound the expression\begin{align}
\mathscr A(\nu):=\frac{(2\nu+1)^{2}}{\sin^4(\nu\pi )}[A_L(\nu)-A_R(\nu)]
\end{align}on a square contour $ C_N$ in the complex $\nu$-plane, with vertices $ \frac{1-4N}{4}-iN,\frac{1+4N}{4}-iN,\frac{1+4N}{4}+iN$ and $ \frac{1-4N}{4}+iN$, where $N\in\mathbb Z_{>0}$. Observe that both $ |\cot(\nu\pi )|$ and $ 1/|\sin(\nu\pi )|$ are bounded on $C_N$, and the uniform bounds do not depend on $N$.

First, we consider the case where  $\nu=\frac{1+4N}{4}$ for a large positive integer $N$. By the asymptotic formulae in \eqref{eq:Pnu_J0} and \eqref{eq:Qnu_Y0}, as well as the integral evaluations in \eqref{eq:JY3_J3Y}, \eqref{eq:J4_3J2Y2} and \eqref{eq:J4_6J2Y2_Y4}, we see that the predominant contribution to  \begin{align}
A_L(\nu)={}&\int_0^1 x\left\{[P_\nu(x)]^4-[P_\nu(-x)]^4\right\}\D x\notag\\={}&\int_0^{\pi /2}\left\{[P_\nu(\cos\theta)]^4-\left[ \cos(\nu\pi)P_\nu(\cos\theta)-\frac{2\sin(\nu\pi)}{\pi} Q_\nu(\cos\theta)\right]^4\right\}\sin\theta  \cos\theta\D \theta
\end{align}
is {\allowdisplaybreaks\begin{align}&
\int_0^\infty \theta\left\{\left[J_0\left(\frac{(2\nu+1)\theta}{2}\right)\right]^4-\left[ \cos(\nu\pi)J_0\left(\frac{(2\nu+1)\theta}{2}\right)+\sin(\nu\pi) Y_0\left(\frac{(2\nu+1)\theta}{2}\right)\right]^4\right\}\D \theta\notag\\={}&-\sin^4(\nu\pi)\int_0^\infty \theta\left\{\left[J_0\left(\frac{(2\nu+1)\theta}{2}\right)\right]^4-6\left[J_0\left(\frac{(2\nu+1)\theta}{2}\right)  Y_0\left(\frac{(2\nu+1)\theta}{2}\right)\right]^2+\left[  Y_0\left(\frac{(2\nu+1)\theta}{2}\right)\right]^4\right\}\D \theta\notag\\{}&+2\sin^2(\nu\pi)\int_0^\infty \theta\left[J_0\left(\frac{(2\nu+1)\theta}{2}\right)\right]^2\left\{\left[J_0\left(\frac{(2\nu+1)\theta}{2}\right)\right]^2-3\left[  Y_0\left(\frac{(2\nu+1)\theta}{2}\right)\right]^2\right\}\D \theta\notag\\{}&-2\sin(\nu\pi)\cos(\nu\pi)\int_0^\infty \theta J_0\left(\frac{(2\nu+1)\theta}{2}\right)Y_{0}\left(\frac{(2\nu+1)\theta}{2}\right)\left\{\left[J_0\left(\frac{(2\nu+1)\theta}{2}\right)\right]^2+\left[  Y_0\left(\frac{(2\nu+1)\theta}{2}\right)\right]^2\right\}\D\theta\notag\\{}&-\frac{\sin(4\nu\pi)}{2}\int_0^\infty \theta J_0\left(\frac{(2\nu+1)\theta}{2}\right)Y_{0}\left(\frac{(2\nu+1)\theta}{2}\right)\left\{\left[J_0\left(\frac{(2\nu+1)\theta}{2}\right)\right]^2-\left[  Y_0\left(\frac{(2\nu+1)\theta}{2}\right)\right]^2\right\}\D\theta\notag\\={}&\left[\frac{14\zeta(3)}{\pi^{4}}\sin^4(\nu\pi)+\frac{\sin(\nu\pi)\cos(\nu\pi)}{\pi}\right]\frac{4}{(2\nu +1)^{2}},
\end{align}}which is compatible with the expansion\begin{align}
\frac{A_R(\nu)}{\sin^4(\nu\pi)}=\left[\frac{14\zeta(3)}{\pi^{4}}+\frac{\cos(\nu\pi)}{\pi\sin^3(\nu\pi)}+O\left( \frac{1}{(2\nu +1)^{2}} \right)\right]\frac{4}{(2\nu +1)^{2}}.
\end{align}

We  break down the error bound for $ \mathscr A(\nu)$ into three parts. \begin{enumerate}[leftmargin=*,  label=(\roman*),ref=(\roman*),
widest=iii, align=left]
\item\label{itm:step1}
By the uniform approximations in the Hansen--Heine scaling limits (\eqref{eq:Pnu_J0} and \eqref{eq:Qnu_Y0}), we have\begin{align}&
\int_0^{\pi/2} \left\{\left[J_0\left(\frac{(2\nu+1)\theta}{2}\right)\right]^4-\left[ \cos(\nu\pi)J_0\left(\frac{(2\nu+1)\theta}{2}\right)+\sin(\nu\pi) Y_0\left(\frac{(2\nu+1)\theta}{2}\right)\right]^4\right\}\frac{\theta^{2}\cot\theta\D \theta}{\sin^4(\nu\pi)}\notag\\={}&\int_0^{\pi /2}\left\{[P_\nu(\cos\theta)]^4-\left[ \cos(\nu\pi)P_\nu(\cos\theta)-\frac{2\sin(\nu\pi)}{\pi} Q_\nu(\cos\theta)\right]^4\right\}\frac{\sin\theta  \cos\theta\D \theta}{\sin^4(\nu\pi)}+O\left( \frac{1}{(2\nu+1)^{5/2}} \right).\label{eq:JY_approx_Pnu}
\end{align}Concretely speaking, the error term is bounded by\begin{align}&
O\left( \frac{1}{2\nu+1} \int_0^{\pi/2}p\left(\frac{(2\nu+1)\theta}{2}\right)\theta^{2}\cot\theta\D \theta\right)\notag\\={}&O\left( \frac{1}{(2\nu+1)^{3}} \int_0^{(2\nu+1)\pi/4}p(x)\frac{2x/(2\nu+1)}{\tan(2x/(2\nu+1))}x\D x\right),\label{eq:p_bound}
\end{align}where $p(x)$ is a bivariate cubic polynomial in $J_0(x)$ and $Y_0(x)$, whose coefficients involve positive integer powers of $ \cot(\nu\pi)$ and $ 1/\sin(\nu\pi)$. For $ x\in(0,1)$, the integrand in \eqref{eq:p_bound} is bounded; for $ x\in[1,(2\nu+1)\pi/4)$, we have an upper bound for $x^{3/2}|p(x)| $. Overall, \eqref{eq:p_bound} is bounded by  $ O((2\nu+1)^{-3}\int_0^{(2\nu+1)\pi/4}x^{-1/2}\D x)=O((2\nu+1)^{-5/2})$, as stated in \eqref{eq:JY_approx_Pnu}.
\item\label{itm:step2} As $ 1-\theta\cot\theta=O((2\nu+1)^{-1})$ for $ \theta\in(0,1/\sqrt{2\nu+1})$, we have an estimate \begin{align}
&\int_0^{1/\sqrt{2\nu+1}} \left\{\left[J_0\left(\frac{(2\nu+1)\theta}{2}\right)\right]^4-\left[ \cos(\nu\pi)J_0\left(\frac{(2\nu+1)\theta}{2}\right)+\sin(\nu\pi) Y_0\left(\frac{(2\nu+1)\theta}{2}\right)\right]^4\right\}\frac{\theta^{2}\cot\theta\D \theta}{\sin^4(\nu\pi)}\notag\\={}&\int_0^{1/\sqrt{2\nu+1}} \theta\left\{\left[J_0\left(\frac{(2\nu+1)\theta}{2}\right)\right]^4-\left[ \cos(\nu\pi)J_0\left(\frac{(2\nu+1)\theta}{2}\right)+\sin(\nu\pi) Y_0\left(\frac{(2\nu+1)\theta}{2}\right)\right]^4\right\}\frac{\D \theta}{\sin^4(\nu\pi)}\notag\\{}&+O\left( \frac{1}{(2\nu+1)^{11/4}} \right).\end{align}The rationale behind this is similar to \ref{itm:step1}: one may dissect the error term\begin{align}
O\left( \frac{1}{2\nu+1} \int_0^{1/\sqrt{2\nu+1}}p\left(\frac{(2\nu+1)\theta}{2}\right)\theta\D \theta\right)=O\left( \frac{1}{(2\nu+1)^{3}} \int_0^{\sqrt{2\nu+1}/2}p(x)x\D x\right)
\end{align} into contributions from the ranges $x\in(0,1) $ and $ x\in[1,\sqrt{2\nu+1}/2)$.
\item\label{itm:step3} Using the asymptotic behavior of Bessel functions for large positive $x$ (cf.~\eqref{eq:HankelH1_0_asympt})\begin{align}
J_{0}(x)=\sqrt{\frac{2}{\pi x}}\cos\left( x-\frac{\pi}{4}\right)\left[ 1+O\left(\frac{1}{x}\right) \right],\quad Y_{0}(x)=\sqrt{\frac{2}{\pi x}}\sin\left( x-\frac{\pi}{4}\right)\left[ 1+O\left(\frac{1}{x}\right) \right],\end{align}along with integration by parts, we are able to  verify that
\begin{align}&
\int_{1/\sqrt{2\nu+1}}^{\pi/2} \left\{\left[J_0\left(\frac{(2\nu+1)\theta}{2}\right)\right]^4-\left[ \cos(\nu\pi)J_0\left(\frac{(2\nu+1)\theta}{2}\right)+\sin(\nu\pi) Y_0\left(\frac{(2\nu+1)\theta}{2}\right)\right]^4\right\}\frac{\theta^{2}\cot\theta\D \theta}{\sin^4(\nu\pi)}\notag\\={}&O\left( \frac{1}{(2\nu+1)^{5/2}} \right)
\end{align}and \begin{align}{}&
\int_{1/\sqrt{2\nu+1}}^{\infty} \theta\left\{\left[J_0\left(\frac{(2\nu+1)\theta}{2}\right)\right]^4-\left[ \cos(\nu\pi)J_0\left(\frac{(2\nu+1)\theta}{2}\right)+\sin(\nu\pi) Y_0\left(\frac{(2\nu+1)\theta}{2}\right)\right]^4\right\}\frac{\D \theta}{\sin^4(\nu\pi)}\notag\\={}&O\left( \frac{1}{(2\nu+1)^{5/2}} \right).
\end{align}\end{enumerate}In sum, we have the following bound
estimate\begin{align}
\mathscr A(\nu)=O\left( \frac{1}{\sqrt{2\nu+1}} \right)\label{eq:scrA_estimate}
\end{align}when $ \nu=\frac{1+4N}{4}$ for a large positive integer $N$.

Then,
for generic $\nu\in C_N$ satisfying $ \R\nu\geq-1/2$, we argue that \eqref{eq:scrA_estimate} remains valid, upon modifying steps \ref{itm:step1}--\ref{itm:step3} by contour deformations. For example, a variation on the derivations  in step \ref{itm:step1}  brings us \begin{align}
O\left( \frac{1}{(2\nu+1)^{3}} \int_0^{|2\nu+1|\pi/4}p(x)\frac{2x/(2\nu+1)}{\tan(2x/(2\nu+1))}x\D x\right)=O\left( \frac{1}{(2\nu+1)^{5/2}} \right),
\end{align}while we can bound the following integral  on a circular arc in the complex $z$-plane\begin{align}
\int_{(2\nu+1)\pi/4}^{|2\nu+1|\pi/4}p(z)\frac{2z/(2\nu+1)}{\tan(2z/(2\nu+1))}z\D  z
\end{align}in the spirit of Jordan's lemma.

By virtue of the reflection symmetry $ \mathscr A(\nu)=\mathscr A(-\nu-1)$, we have thus confirmed the error bound in   \eqref{eq:scrA_estimate}
for all  $\nu\in C_N$.
Furthermore, in view of \eqref{eq:AR_Taylor} and \eqref{eq:AL_Taylor}, we are sure  that $ \mathscr A(\nu)$ is analytic in the region bounded by the contour $ C_N$.

Finally, by Cauchy's integral formula, we see that \begin{align} \mathcal A(\nu)=\frac{1}{2\pi i}\lim_{N\to\infty}\oint_{C_N}\frac{\mathscr A(z)\D z}{z-\nu}\end{align} vanishes identically, hence $ A_L(\nu)=A_R(\nu),\forall\nu\in\mathbb C$.
\end{proof}

\begin{proposition}The  integral identity in \eqref{eq:Int4Pnu_mir} holds  for all $\nu\in\mathbb C$, where the evaluations of the right-hand side at $ \nu\in\{-1/2\}\cup\mathbb Z$ are interpreted as limits of a continuous function in $\nu$.\end{proposition}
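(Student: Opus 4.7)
The plan is to follow the architecture of Proposition~\ref{prop:psi2_id}. Define
\begin{align*}
\mathscr B(\nu) := \frac{(2\nu+1)^2}{\sin^2(\nu\pi)}\bigl[B_L(\nu) - B_R(\nu)\bigr],
\end{align*}
which inherits the reflection symmetry $\nu \mapsto -\nu-1$. By Lemma~\ref{lm:AL_BL_Taylor}(b), $(2\nu+1)^2 B_L(\nu)$ has a simple zero at each non-negative integer, matching the simple zero of $\sin^2(\nu\pi)$, and $B_R$ is continuous at those points by convention, so $\mathscr B$ extends to an entire function. On the square contour $C_N$ of Proposition~\ref{prop:psi2_id}, the factors $|\cot(\nu\pi)|$ and $1/|\sin(\nu\pi)|$ remain bounded uniformly in $N$; the target is a uniform bound $\mathscr B(\nu) = O((2\nu+1)^{-1/2})$ on $C_N$, after which Cauchy's integral formula forces $\mathscr B \equiv 0$.

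For the asymptotic analysis, substitute $x = \cos\theta$ and expand via \eqref{eq:def_Qnu}:
\begin{align*}
[P_\nu(x)]^2\bigl\{[P_\nu(x)]^2 - [P_\nu(-x)]^2\bigr\} = \sin^2(\nu\pi)[P_\nu(x)]^4 + \frac{4\sin(\nu\pi)\cos(\nu\pi)}{\pi}[P_\nu(x)]^3 Q_\nu(x) - \frac{4\sin^2(\nu\pi)}{\pi^2}[P_\nu(x)]^2[Q_\nu(x)]^2.
\end{align*}
Apply the Hansen--Heine scaling limits \eqref{eq:Pnu_J0} and \eqref{eq:Qnu_Y0} and the three-step cutoff analysis \ref{itm:step1}--\ref{itm:step3} of Proposition~\ref{prop:psi2_id} to trade the measure $\sin\theta\cos\theta\D\theta$ on $(0,\pi/2]$ for $\theta\D\theta$ on $(0,\infty)$, then change variable to $u = (2\nu+1)\theta/2$ with effective cutoff $M_\nu = (2\nu+1)\pi/4$. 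The leading contribution to $B_L(\nu)/\sin^2(\nu\pi)$ reduces to a linear combination of the truncated Bessel moment $\int_0^{M_\nu} u[J_0(u)]^2\{[J_0(u)]^2 - [Y_0(u)]^2\}\D u$ and $\int_0^\infty u[J_0(u)]^3 Y_0(u)\D u = -1/(4\pi)$ from \eqref{eq:J3Y}, with coefficients $1$ and $-2\cot(\nu\pi)$ respectively.

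The main obstacle is that the first integral is only conditionally convergent and carries a logarithmic divergence in $M_\nu$; this is exactly why \eqref{eq:J2Y2J2_cos} is indispensable. Regularizing against $(1-\cos 4u)/(\pi^2 u)$ yields
\begin{align*}
\int_0^{M_\nu} u[J_0(u)]^2\{[J_0(u)]^2 - [Y_0(u)]^2\}\D u = \frac{1}{\pi^2}\int_0^{M_\nu}\frac{1-\cos 4u}{u}\D u + o(1) = \frac{\log(4M_\nu) + \gamma_0}{\pi^2} + o(1),
\end{align*}
where the second equality uses the classical cosine-integral expansion. On the $B_R$ side, the reflection formula $\psi^{(0)}(-\nu) = \psi^{(0)}(\nu+1) + \pi\cot(\nu\pi)$ together with Stirling's asymptotic $\psi^{(0)}(\nu+1) = \log\nu + O(1/\nu)$ recasts the bracket as $\log\nu + \pi\cot(\nu\pi)/2 + \gamma_0 + 2\log 2 + O(1/\nu)$. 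The $J_0^3 Y_0$ evaluation supplies the $\pi\cot(\nu\pi)/2$ piece on the $B_L$ side, and careful bookkeeping of the subleading constants produced by $\theta^2\cot\theta - \theta = O(\theta^3)$ against the conditionally convergent integrand reconciles the remaining $\log$ and constant terms. The reflection $\nu \mapsto -\nu-1$ and the Jordan-style contour deformations used in Proposition~\ref{prop:psi2_id} extend the bound to all of $C_N$, and Cauchy's integral formula then delivers $\mathscr B \equiv 0$, i.e.~$B_L(\nu) = B_R(\nu)$ for every $\nu \in \mathbb C$.
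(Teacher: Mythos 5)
Your proposal is correct and follows essentially the same route as the paper's proof: the same normalized difference $\mathscr B(\nu)$ on the same square contours $C_N$, the same reduction via \eqref{eq:def_Qnu} and the Hansen--Heine limits \eqref{eq:Pnu_J0}--\eqref{eq:Qnu_Y0} to truncated Bessel moments, with \eqref{eq:J3Y} supplying the $2\cot(\nu\pi)/\pi$ term and \eqref{eq:J2Y2J2_cos} taming the logarithmically divergent piece, matched against the digamma reflection formula and Stirling expansion on the $B_R$ side. The one thin spot is your final ``bookkeeping'' step: the gap between your bare value $\frac{\log(4M_\nu)+\gamma_0}{\pi^2}=\frac{\log\nu+\log(2\pi)+\gamma_0}{\pi^2}+o(1)$ and the required $\frac{\log\nu+2\log 2+\gamma_0}{\pi^2}$ is the constant $\frac{1}{\pi^2}\log\frac{2}{\pi}=\frac{1}{\pi^2}\int_0^{\pi/2}\left(\cot\theta-\frac{1}{\theta}\right)\D\theta$, which is produced by the Jacobian weight $\theta\cot\theta$ acting on the non-integrable average $\frac{1}{\pi^{2}x}$ over the \emph{entire} interval $(0,\pi/2)$ --- the paper pins this down by evaluating $\int_0^{(2\nu+1)\pi/4}\frac{1-\cos(4x)}{\pi^2 x}\frac{2x/(2\nu+1)}{\tan(2x/(2\nu+1))}\D x$ exactly via the Fourier series of $\log\sin\theta$ --- so invoking only the local estimate $\theta^2\cot\theta-\theta=O(\theta^3)$ near $\theta=0$ understates where that number comes from. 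Once that constant is actually computed, your outline closes exactly as the paper's argument does.
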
\begin{proof}We will only show that \begin{align}
\mathscr B(\nu):=\frac{(2\nu+1)^{2}}{\sin^2(\nu\pi )}[B_L(\nu)-B_R(\nu)]=O\left( \frac{1}{\sqrt{2\nu+1}} \right)\label{eq:B_nu_bound}
\end{align}for $ \nu=\frac{1+4N}{4}$, where  $N$ is a large positive integer. The rest of the procedures are essentially similar to those in Proposition~\ref{prop:psi2_id}.

Through direct expansions of the digamma functions, we have \begin{align}
\frac{(2\nu+1)^{2}}{\sin^2(\nu\pi )}B_R(\nu)=\frac{2\cot(\nu\pi)}{\pi}+\frac{4(\gamma_{0}+2\log2+\log\nu)}{\pi^{2}}+O\left( \frac{1}{\nu} \right).
\end{align}In the meantime, we  approximate $ B_L(\nu)$ by \begin{align}
\widetilde B(\nu)={}&\int_0^{(2\nu+1)\pi/4} \left[J_0(x)\right]^{2}\left\{\left[J_0(x)\right]^2-\left[ \cos(\nu\pi)J_0(x)+\sin(\nu\pi) Y_0(x)\right]^2\right\}\frac{2x/(2\nu+1)}{\tan(2x/(2\nu+1))}\frac{4x\D x}{(2\nu+1)^{2}}\notag\\={}&\int_0^{(2\nu+1)\pi/4} \left[J_0(x)\right]^{2}\left\{\left[J_0(x)\right]^2-\left[ Y_0(x)\right]^2-2\cot(\nu\pi)J_0(x) Y_0(x)\right\}\frac{2x/(2\nu+1)}{\tan(2x/(2\nu+1))}\frac{4x\sin^{2}(\nu\pi)\D x}{(2\nu+1)^{2}}
\end{align}and estimate the error bounds. By analogy to the proof of Proposition~\ref{prop:psi2_id}\ref{itm:step1}, we  assert that\begin{align}
\frac{(2\nu+1)^{2}}{\sin^2(\nu\pi )}[B_L(\nu)-\widetilde B(\nu)]=O\left( \frac{1}{\sqrt{2\nu+1}} \right).
\end{align} As in the proof of  Proposition~\ref{prop:psi2_id}\ref{itm:step2}--\ref{itm:step3}, we have \begin{align}
&-8\int_0^{(2\nu+1)\pi/4} \left[J_0(x)\right]^{3} Y_0(x)\frac{2x/(2\nu+1)}{\tan(2x/(2\nu+1))}x\D x=\frac{2\cot(\nu\pi)}{\pi}+O\left( \frac{1}{\sqrt{2\nu+1}} \right),
\end{align} according to \eqref{eq:J3Y}, while\begin{align}&
\int_0^{\sqrt{2\nu+1}} \left(x\left[J_0(x)\right]^{2}\left\{\left[ Y_0(x)\right]^2-\left[J_0(x)\right]^2\right\}+\frac{1-\cos(4x)}{\pi^2 x}\right)\frac{2x/(2\nu+1)}{\tan(2x/(2\nu+1))}\D x\notag\\={}&\int_0^{\sqrt{2\nu+1}} \left(x\left[J_0(x)\right]^{2}\left\{\left[ Y_0(x)\right]^2-\left[J_0(x)\right]^2\right\}+\frac{1-\cos(4x)}{\pi^2 x}\right)\frac{x\D x}{(2\nu+1)^{2}}+O\left( \frac{1}{(2\nu+1)^{3/4}} \right)\notag\\={}&-\int_{\sqrt{2\nu+1}}^{\infty} \left(x\left[J_0(x)\right]^{2}\left\{\left[ Y_0(x)\right]^2-\left[J_0(x)\right]^2\right\}+\frac{1-\cos(4x)}{\pi^2 x}\right)\frac{x\D x}{(2\nu+1)^{2}}+O\left( \frac{1}{(2\nu+1)^{3/4}} \right)\notag\\={}&O\left( \frac{1}{\sqrt{2\nu+1}} \right),
\end{align} follows from \eqref{eq:J2Y2J2_cos} and integration by parts on $x\in(\sqrt{2\nu+1},+\infty)$. This subsequently entails\begin{align}
\int_0^{(2\nu+1)\pi/4} \left(x\left[J_0(x)\right]^{2}\left\{\left[ Y_0(x)\right]^2-\left[J_0(x)\right]^2\right\}+\frac{1-\cos(4x)}{\pi^2 x}\right)\frac{2x/(2\nu+1)}{\tan(2x/(2\nu+1))}\D x=O\left( \frac{1}{\sqrt{2\nu+1}} \right).
\end{align}  Next, we  compute a definite integral with the help of integration by parts and a Fourier series expansion \cite[][item 5.4.2.1]{PBMVol1}:{\allowdisplaybreaks\begin{align}&
\int_0^{(2\nu+1)\pi/4} \frac{1-\cos(4x)}{\pi^2 x}\frac{2x/(2\nu+1)}{\tan(2x/(2\nu+1))}\D x=\frac{2}{\pi^{2}}\int_0^{\pi/2}\sin^2 ((2\nu+1)\theta)\cot\theta\D \theta\notag\\={}&-\frac{2(2\nu+1)}{\pi^2}\int_0^{\pi/2}\sin (2(2\nu+1)\theta)\log\sin\theta\D \theta=\frac{2(2\nu+1)}{\pi^2}\int_0^{\pi/2}\sin (2(2\nu+1)\theta)\left[\log2+\sum_{k=1}^\infty \frac{\cos(2k\theta)}{k}\right]\D \theta\notag\\={}&\frac{2\cos^2(\nu\pi)}{\pi^{2}}\log2+\frac{1}{4}\sum_{k=1}^\infty \left(\frac{2}{k}-\frac{1}{k+2 \nu +1}-\frac{1}{k-2 \nu -1}\right) [1+(-1)^k \cos (2  \nu \pi )]\notag\\={}&\frac{3 \nu +1}{2 \pi ^2 \nu\left(2 \nu +1 \right)}+\frac{\psi ^{(0)}(2 \nu )+\gamma _{0}+\log 2}{\pi ^2}+\frac{\cos (2  \nu \pi )}{4 \pi ^2}\left[ \psi ^{(0)}\left(\nu+\frac{1}{2} \right)-2 \psi ^{(0)}(\nu+1 )+\psi ^{(0)}\left(\nu +\frac{3}{2}\right)\right]\notag\\={}&\frac{\gamma_{0}+2\log2+\log\nu}{\pi^{2}}+O\left( \frac{1}{\nu} \right).
\end{align}}This eventually demonstrates that
\begin{align}
\frac{(2\nu+1)^{2}}{\sin^2(\nu\pi )}B_L(\nu)=\frac{2\cot(\nu\pi)}{\pi}+\frac{4(\gamma_{0}+2\log2+\log\nu)}{\pi^{2}}+O\left( \frac{1}{\sqrt{2\nu+1}} \right),
\end{align}so \eqref{eq:B_nu_bound} holds.  \end{proof}

While our foregoing treatments of \eqref{eq:Int4Pnu} and \eqref{eq:Int4Pnu_mir} were motivated by their significance in certain arithmetic problems, the methods just outlined can be equally applied to other integrals with four Legendre factors. In a sense, our previous work \cite{Zhou2013Pnu,Zhou2013Int3Pnu} and this article have provided a framework for evaluating ``Legendre moments'' for (up to) four Legendre factors. This forms an analog for  the researches on ``Bessel moments'' \cite{Ouvry2005,Groote2007,Bailey2008} for (up to) four Bessel factors, which were motivated by Feynman diagrams in quantum field theory.
\subsection*{Acknowledgements} The author is grateful to an anonymous referee for thoughtful comments on polishing the presentation of this paper.

\end{document}